\documentclass[12]{amsart}
\usepackage{amscd,amssymb,mathabx}
\usepackage[all]{xy}
\usepackage{graphicx,calrsfs}
\usepackage[colorlinks,plainpages,backref,urlcolor=blue]{hyperref}

\topmargin=0.1in
\textwidth5.8in
\textheight7.68in
\oddsidemargin=0.3in
\evensidemargin=0.3in

\newtheorem{theorem}{Theorem}[section]
\newtheorem{corollary}[theorem]{Corollary}
\newtheorem{lemma}[theorem]{Lemma}
\newtheorem{prop}[theorem]{Proposition}

\theoremstyle{definition}
\newtheorem{definition}[theorem]{Definition}
\newtheorem{example}[theorem]{Example}
\newtheorem{remark}[theorem]{Remark}
\newtheorem*{ack}{Acknowledgment}

\newcommand{\N}{\mathbb{N}}
\newcommand{\Z}{\mathbb{Z}}

\newcommand{\C}{\mathbb{C}}

\newcommand{\TT}{\mathbb{T}}
\newcommand{\PP}{\mathbb{P}}
\newcommand{\GG}{\mathbb{G}}

\DeclareMathAlphabet{\pazocal}{OMS}{zplm}{m}{n}

\newcommand{\XX}{{\pazocal X}}

\newcommand{\CC}{{\pazocal{C}}}

\newcommand{\RR}{{\mathcal R}}
\newcommand{\VV}{{\mathcal V}}

\newcommand{\F}{{\mathcal{F}}}

\newcommand{\g}{{\mathfrak{g}}}

\newcommand{\h}{{\mathfrak{h}}}
\newcommand{\kk}{{\mathfrak{k}}}
\newcommand{\gl}{{\mathfrak{gl}}}
\renewcommand{\sl}{{\mathfrak{sl}}}

\renewcommand{\ss}{{\mathfrak{s}}}

\newcommand{\n}{{\mathfrak{n}}}

\newcommand{\G}{{\Gamma}}

\DeclareMathOperator{\rank}{rank}

\DeclareMathOperator{\im}{im}

\DeclareMathOperator{\id}{id}
\DeclareMathOperator{\ab}{{ab}}

\DeclareMathOperator{\GL}{GL}
\DeclareMathOperator{\SL}{SL}

\DeclareMathOperator{\PSL}{PSL}
\DeclareMathOperator{\Hom}{{Hom}}

\DeclareMathOperator{\Der}{{Der}}
\DeclareMathOperator{\ad}{ad}

\DeclareMathOperator{\cone}{cone}
\DeclareMathOperator{\rep}{Rep}
\DeclareMathOperator{\eig}{Eigen}

\newcommand{\same}{\Longleftrightarrow}
\newcommand{\surj}{\twoheadrightarrow}
\newcommand{\inj}{\hookrightarrow}

\newcommand{\abs}[1]{\left| #1 \right|}

\def\dot{\mathchar"013A}
\newcommand{\hdot}{{\raise1pt\hbox to0.35em{\Huge $\dot$}}}
\newcommand{\cdga}{\ensuremath{\mathsf{cdga}}}

\begin{document}

\title[Rank two jump loci for solvmanifolds and Lie algebras]{%
Rank two jump loci for solvmanifolds and Lie algebras}

\author[S.~Papadima]{\c Stefan Papadima$^1$}
\address{Simion Stoilow Institute of Mathematics,
P.O. Box 1-764, RO-014700 Bucharest, Romania}
\email{Stefan.Papadima@imar.ro}
\thanks{$^1$Partially supported by PN-II-ID-PCE-2011-3-0288, grant 
132/05.10.2011}

\author[L.~Paunescu]{Lauren\c tiu Paunescu}
\address{School of Mathematics and Statistics, The University of Sydney,  Sydney, 2006, Australia}
\email{laurent@maths.usyd.edu.au}

\subjclass[2010]{Primary
55N25.
Secondary
17B56, 20J06, 22E40.}

\keywords{Representation variety, characteristic variety, resonance variety, 
analytic germ, solvmanifold, Lie algebra.}

\begin{abstract}
We consider representation varieties in $\SL_2$ for lattices in solvable Lie groups,
and representation varieties in $\sl_2$ for finite-dimensional Lie algebras.
Inside them, we examine depth 1 characteristic varieties for solvmanifolds, 
respectively resonance varieties for cochain Differential Graded Algebras of Lie algebras.
We prove a general result that leads, in both cases, to the complete description of 
the analytic germs at the origin, for the corresponding embedded rank 2 jump loci.
\end{abstract}

\maketitle
\setcounter{tocdepth}{1}

\section{Introduction and statement of results}
\label{sec:intro}

Jump loci are basic objects in geometry, topology and group theory. They appear in the following way.
Let $M$ be a connected, compact differentiable manifold (up to homotopy), with fundamental group $\pi$. Let
$\iota : \GG \to \GL (V)$ be an algebraic representation of complex linear algebraic groups. 
The representation variety $\Hom (\pi, \GG)$ is an affine variety with distinguished basepoint $1$,
the trivial group homomorphism. It encodes the representation theory of $\pi$ into $\GG$. Furthermore,
it comes equipped with a collection of closed subvarieties: the {\em characteristic varieties} (or jump loci)
in degree $i$ and depth $r$
\begin{equation}
\label{eq:defv}
\VV^i_r(M, \iota)=\{\rho \in \Hom(\pi, \GG) \mid
\dim H^i(M, {}_{\iota\rho}V)\ge r\} \, ,
\end{equation}
which stratify the local systems on $M$ with typical fiber $V$ that factor through $\iota$, according to
the dimension of the corresponding twisted cohomology of $M$. This rich structure contains valuable
information on the geometry and topology of $M$. 

The {\em rank one} case (when $\iota=\id_{\C^{\times}}$) received a lot of attention. The
corresponding characteristic varieties, denoted simply by $\VV^i_r (M)$, are the topological
counterpart, for $r=1$, of the loci defined in algebraic geometry by Green and Lazarsfeld in \cite{GL1, GL2},
when $M$ is a complex projective manifold. Results of Arapura \cite{A}, further refined in  \cite{DPS-duke}, 
show how the regular maps from $M$ onto curves of general type may be recovered from the 
geometry of the variety $\VV^1_1 (M)$, when $M$ is supposed merely quasi-projective. In topology,
it is known that the rank $1$ jump loci  $\VV^i_1 (M)$ control delicate finiteness properties of
Alexander-type invariants of $M$. This fact was used in \cite{DP-ann, DHP} to obtain significant results
on certain important subgroups of mapping class groups of closed Riemann surfaces.

The {\em rank two} case (when $\GG$ is $\SL_2 (\C)$ or $\PSL_2 (\C)$) is considerably more difficult.
Indeed, the universality theorem of Kapovich and Millson \cite{KM} shows that the varieties $\Hom (\pi, \PSL_2 (\C))$
have arbitrarily bad singularities away from $1$, when $\pi$ runs through the family of Artin groups. 
At the same time, the large number of applications of this case makes it very interesting. For example, a 
recent result from \cite{PS14} establishes a connexion between the rank $2$ case and a difficult open
problem about the monodromy action on the Milnor fiber homology of a complex hyperplane arrangement.

In this note, we approach analytic germs at $1$ of representation varieties and jump loci through the prism of
the main result from \cite{DP-ccm}. The idea is to replace $M$ by a {\em model} $A=(A^\hdot, d)$:
a connected, finite-dimensional Commutative Differential Graded Algebra (for short, a $\cdga$) having 
the same Sullivan minimal model \cite{S} as the complex de Rham $\cdga$ of $M$. Denote by 
$\theta\colon \g\to \gl(V)$ the tangent map at $1$ of $\iota$. Replace the $\GG$-representation variety of $\pi$ by
the affine variety of $\g$-valued {\em flat connections} on $A$, $\F(A,\g)\subseteq A^1\otimes \g$, with
natural basepoint $0$. For $\omega \in \F(A,\g)$, consider the cochain complex $(A^\hdot \otimes V, d_{\omega})$,
where $d_{\omega}$ is the associated algebraic covariant derivative. Replace $\VV^i_r(M, \iota)$ in \eqref{eq:defv}   by the 
{\em resonance varieties} $\RR^i_r(A, \theta)$ defined in \eqref{eq:defres}. These Zariski closed subvarieties of
$\F(A,\g)$ record how $\dim H^i(A \otimes V, d_{\omega})$ jumps, for $\omega \in \F(A,\g)$. See Section \ref{sec:gral}
for more details. 

In what follows, we will denote by $\XX_{(x)}$ the analytic germ at $x$ of an affine variety $\XX$. 
Theorem B from \cite{DP-ccm} establishes an isomorphism $\Hom (\pi, \GG)_{(1)} \simeq \F(A,\g)_{(0)}$
that identifies $\VV^i_r(M, \iota)_{(1)}$ with $\RR^i_r(A, \theta)_{(0)}$ for all $i,r$, when $A$ models $M$. We
are going to examine {\em topological and algebraic Green--Lazarsfeld loci}, corresponding to $r=1$. In
the rank $1$ case, we will drop $\iota$ and $\theta$ from notation.

To state our first main result, we need to recall two constructions from \cite{MPPS} (see also Section \ref{sec:gral}).
The Zariski closed subset $\F^1 (A,\g)\subseteq \F(A,\g)$ consists of those flat connections of the form 
$\omega= \eta\otimes g$ with $d\eta =0$. The additional condition $\det \theta (g)=0$ defines the 
Zariski closed subset $\Pi(A,\theta)\subseteq \F^1(A,\g)$.

\begin{theorem}[Corollary \ref{cor:pisharp}]
\label{thm:intro1}
Let $A=(A^\hdot, d)$ be a connected, finite-dimensional $\cdga$ over $\C$. Let $\theta\colon \g\to \gl(V)$
be a finite-dimensional representation of a finite-dimensional Lie algebra $\g$, with $V\ne 0$. Assume that $0$
is an isolated point of $\bigcup_{i\ge 0} \RR^i_1(A)$, and $\F(A,\g)_{(0)}= \F^1 (A,\g)_{(0)}$. Then
$\RR^i_1(A, \theta)_{(0)}= \Pi(A,\theta)_{(0)}$, for all $i$. 
\end{theorem}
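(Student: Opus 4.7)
The plan is to exploit the hypothesis $\F(A,\g)_{(0)}=\F^1(A,\g)_{(0)}$ to restrict the comparison to connections of the form $\omega=\eta\otimes g$ with $d\eta=0$, and then analyze the twisted complex $(A\otimes V, d_\omega)$ through the Jordan decomposition of $\theta(g)$, letting the isolated-point hypothesis kill all nonzero-eigenvalue contributions.

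Concretely, for a decomposable $\omega=\eta\otimes g$ near the origin, I would split $V=\bigoplus_\lambda V_\lambda$ into generalized $\theta(g)$-eigenspaces. The covariant differential $d_\omega$ preserves the decomposition, so $(A\otimes V, d_\omega)=\bigoplus_\lambda (A\otimes V_\lambda, d_\omega)$. Writing $\theta(g)|_{V_\lambda}=\lambda\,\id+N_\lambda$ with $N_\lambda$ nilpotent, the differential on the $\lambda$-summand decomposes as $(d+\lambda\eta\wedge)\otimes\id+(\eta\wedge)\otimes N_\lambda$. The nilpotent filtration $F^k=A\otimes N_\lambda^k V_\lambda$ is preserved by $d_\omega$, with $(\eta\wedge)\otimes N_\lambda$ strictly increasing filtration degree, so one obtains a convergent spectral sequence with
$$E_1^{*,*}=H^*(A,\, d+\lambda\eta\wedge)\otimes \gr V_\lambda.$$

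Now the isolated-point hypothesis does the main work. For $\omega$ near $0$ and $\lambda\neq 0$ an eigenvalue of $\theta(g)$, the element $\lambda\eta\in A^1$ is small; when nonzero, the hypothesis places it outside $\bigcup_i\RR^i_1(A)$, so the Aomoto-type complex $(A,d+\lambda\eta\wedge)$ is acyclic and hence so is $(A\otimes V_\lambda, d_\omega)$. If $\det\theta(g)\neq 0$, every eigenvalue is nonzero and the full twisted complex is acyclic, yielding the inclusion $\RR^i_1(A,\theta)_{(0)}\subseteq\Pi(A,\theta)_{(0)}$ for every $i$. For the reverse inclusion, when $\det\theta(g)=0$ the generalized eigenspace $V_0$ is nonzero; in degree zero a direct check gives $H^0(A\otimes V_0,d_\omega)=\ker\theta(g)\neq 0$, placing $\omega$ in $\RR^0_1(A,\theta)$.

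I expect the main obstacle to be the reverse inclusion in higher degrees $i$ with $H^i(A)\neq 0$: one has to show that the nilpotent perturbation $(\eta\wedge)\otimes N_0$ on the $0$-eigenspace summand cannot conspire to kill every class on the $E_1$-page $H^*(A)\otimes \gr V_0$ in degree $i$. Rather than grinding through the higher differentials of the spectral sequence, I would invoke the main technical theorem from which Corollary~\ref{cor:pisharp} is derived; that result presumably identifies $\RR^i_1(A,\theta)_{(0)}$ with an explicit combination of the decomposable locus $\F^1(A,\g)$ and the singular locus $\{\det\theta(g)=0\}$, handling the positive-degree case uniformly and so closing the argument.
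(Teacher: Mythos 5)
Your forward inclusion is essentially sound and close in spirit to the paper's argument: restricting to decomposable $\omega=\eta\otimes g$ via the hypothesis $\F(A,\g)_{(0)}=\F^1(A,\g)_{(0)}$, and reducing to rank-one resonance through the eigenvalues of $\theta(g)$, is exactly what happens here. The paper packages this differently: it cites the eigenvalue criterion (Theorem \ref{thm:main0}, from \cite{MPPS}) as a black box rather than re-deriving it by your filtration/spectral sequence, and instead of a neighborhood estimate it manufactures from the isolated-point hypothesis a \emph{regular} function $F$ on $\F^1(A,\g)$ with $F(0)\ne 0$ and $\RR^i_1(A,\theta)\cap\F^1(A,\g)\subseteq\Pi(A,\theta)\cup V(F)$, via the symmetric-function trick $\widetilde f(\eta,\lambda)=\prod_j\varphi_0(\lambda_j\eta)$ and the $\C^{\times}$-invariance Lemma \ref{lem:pnice}. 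Your analytic version --- $\lambda_j\eta\to 0$ uniformly as $\omega\to 0$, hence lands outside $\bigcup_i\RR^i_1(A)\setminus\{0\}$ --- does give the germ inclusion $\RR^i_1(A,\theta)_{(0)}\cap\F^1(A,\g)_{(0)}\subseteq\Pi(A,\theta)_{(0)}$, provided you note that $\lambda_j\eta$ depends only on $\omega$ (not on the chosen factorization) and is bounded by a constant times $\|\omega\|$.

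The genuine gap is the reverse inclusion $\Pi(A,\theta)_{(0)}\subseteq\RR^i_1(A,\theta)_{(0)}$ in degrees $i>0$ with $H^i(A)\ne 0$. You correctly identify this as the hard point, but your proposed fix --- ``invoke the main technical theorem from which Corollary \ref{cor:pisharp} is derived'' --- begs the question: that theorem (Theorem \ref{thm:trivres} in the paper) \emph{is} the statement you are proving, minus the flat-connection hypothesis, so you cannot appeal to it. Your spectral sequence with $E_1=H^{\hdot}(A)\otimes\gr V_0$ only bounds $H^i(A\otimes V_0,d_\omega)$ from above; it does not rule out that the differentials induced by the nilpotent part kill everything in degree $i$, which is precisely your own stated worry. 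What closes the argument in the paper is the cited bound \eqref{eq:pibound}, equivalently the ``if'' direction of Theorem \ref{thm:main0} from \cite{MPPS}: for $\omega=\eta\otimes g$ with $\det\theta(g)=0$, the eigenvalue $\lambda=0$ satisfies $\lambda\eta=0\in\RR^i_1(A)$ whenever $H^i(A)\ne 0$, hence $\omega\in\RR^i_1(A,\theta)$. That nonvanishing statement is a real theorem requiring proof or an explicit citation; without it your argument establishes only one of the two inclusions. (A minor caveat, already implicit in the paper: ``for all $i$'' must be read with $H^i(A)\ne 0$, since otherwise $\RR^i_1(A,\theta)_{(0)}=\emptyset$ by \eqref{eq:germs12} while $\Pi(A,\theta)_{(0)}\ni 0$.)
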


In Section \ref{sec:solv}, we apply this general result to a discrete, co-compact subgroup $\G$ of a $1$-connected
solvable real Lie group $S$ and the associated aspherical solvmanifold $M=S/\G$. It leads to the following complete 
description of embedded germs at $1$  of rank two topological Green--Lazarsfeld loci, for solvmanifolds. (In what follows,
we denote by $\PP (H)$ the projective space of a vector space, and by $V(f)$ the zero set of a polynomial.)

\begin{theorem}[Theorem \ref{thm:jumpsol}]
\label{thm:intro2}
Let $M=S/\G$ be a solvmanifold and let $\iota : \GG \to \GL (V)$ be a rational representation of a complex,
semisimple linear algebraic group of rank $1$, with tangent map $\theta\colon \sl_2 \to \gl(V)$. Then 
the germ at $1$ of $\Hom (\G, \GG)$ is isomorphic to the germ at $0$ of the cone on $\PP(H^1(M))\times \PP(\sl_2)$,
and the embedded topological Green--Lazarsfeld germs are given by
\begin{equation*}
\VV^i_1(M, \iota)_{(1)} =
\begin{cases}
\emptyset & \text{if $H^i (M)=0$,}\\
\cone (\PP(H^1(M))\times V(\det \circ \theta))_{(0)}
& \text{otherwise.}
\end{cases}
\end{equation*}
\end{theorem}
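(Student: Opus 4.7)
The plan is to reduce the topological statement to its algebraic counterpart by invoking Theorem~B from \cite{DP-ccm}, which requires a finite-dimensional $\cdga$ model $A$ for $M$, and then to apply Theorem~\ref{thm:intro1} to identify $\RR^i_1(A,\theta)_{(0)}$ with $\Pi(A,\theta)_{(0)}$. The geometric shape of $\Pi(A,\theta)$ then yields the right-hand side of the displayed formula, while the description of $\Hom(\G,\GG)_{(1)}$ follows from the same isomorphism of \cite{DP-ccm} combined with the hypothesis $\F(A,\g)_{(0)} = \F^1(A,\g)_{(0)}$.

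First I would take $A = (\wedge^\hdot \ss^*, d)$, the Chevalley--Eilenberg complex of $\ss = \Lie(S)$: by Nomizu-type results on $\cdga$ models of solvmanifolds this is a finite-dimensional $\cdga$ model of the aspherical $M = S/\G$, with $H^\hdot(A) \cong H^\hdot(M;\C)$ and $Z^1(A) = H^1(A)$ (since $A^0 = \C$ forces $dA^0 = 0$). The two hypotheses of Theorem~\ref{thm:intro1} must then be verified for this $A$ with $\g = \sl_2$: that $0$ is an isolated point of $\bigcup_{i\ge 0} \RR^i_1(A)$, and that $\F(A,\g)_{(0)} = \F^1(A,\g)_{(0)}$. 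Both are accessible from the rank one Green--Lazarsfeld analysis of solvmanifolds carried out in \cite{MPPS}, or through a direct Chevalley--Eilenberg computation exploiting solvability; this step is essentially citation.

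With the hypotheses in hand, Theorem~\ref{thm:intro1} gives $\RR^i_1(A,\theta)_{(0)} = \Pi(A,\theta)_{(0)}$, and Theorem~B from \cite{DP-ccm} transports this to $\VV^i_1(M,\iota)_{(1)} = \Pi(A,\theta)_{(0)}$. It then remains to recognise $\Pi(A,\theta)$ geometrically. Since $Z^1(A) = H^1(M)$, the bilinear map $(\eta,g)\mapsto \eta\otimes g$ identifies $\F^1(A,\sl_2)$ with the affine cone on the Segre variety $\PP(H^1(M))\times \PP(\sl_2)$; combined with the second hypothesis above, this simultaneously computes $\Hom(\G,\GG)_{(1)}$. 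Imposing the further homogeneous equation $\det\theta(g)=0$ on the $\sl_2$-factor carves out exactly $\cone(\PP(H^1(M))\times V(\det\circ\theta))_{(0)}$, matching the displayed formula whenever $H^i(M)\ne 0$. When instead $H^i(M)=0$, evaluation at the trivial representation $\rho=1$ gives twisted cohomology $H^i(M)\otimes V = 0$, so $1\notin \VV^i_1(M,\iota)$, and closedness forces the embedded germ at $1$ to be empty.

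The principal obstacle is the second hypothesis, $\F(A,\g)_{(0)} = \F^1(A,\g)_{(0)}$, which asserts that every germ at $0$ of a flat $\sl_2$-connection on the Chevalley--Eilenberg complex of a solvable Lie algebra decomposes as a single tensor $\eta\otimes g$ with $d\eta=0$. This encodes the vanishing of higher nonabelian Maurer--Cartan obstructions near the origin in the solvable case, and is the precise point where the solvability of $S$ enters the argument; in a more general compact aspherical setting one expects infinitesimal deformations of flat connections that fail to be pure tensors, and the conclusion of Theorem~\ref{thm:intro2} would correspondingly fail.
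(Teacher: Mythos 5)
Your overall strategy---reduce to the algebraic statement via Theorem~B of \cite{DP-ccm}, verify the two hypotheses of Theorem~\ref{thm:intro1}, and read off the answer from the shape of $\Pi(A,\theta)$---is the right skeleton, and your treatment of the case $H^i(M)=0$ and the identification of $\F^1(A,\sl_2)$ with the cone on the Segre variety are fine. But your choice of model is wrong, and this is not a cosmetic issue. The Chevalley--Eilenberg complex $(\bigwedge^{\hdot}\ss^*,d)$ of $\ss=\Lie(S)$ is a $\cdga$ model for $S/\G$ only under additional hypotheses: when $\ss$ is nilpotent (Nomizu) or completely solvable (Hattori, as recalled at the start of Section~\ref{sec:lie}). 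For a general $1$-connected solvable $S$ with lattice $\G$ one does not even have $H^{\hdot}(\bigwedge^{\hdot}\ss^*,d)\cong H^{\hdot}(M;\C)$, so your ``Nomizu-type'' citation does not exist in the generality you need. The paper's entire point here is to replace the Chevalley--Eilenberg/Nomizu model by Kasuya's model $A$, which is a sub-$\cdga$ of $\CC^{\hdot}(\n\otimes\C)$ for $\n$ the \emph{nilshadow} of $\ss$; the inclusion $A^{\hdot}\subseteq\CC^{\hdot}(\n\otimes\C)$ with $\n\otimes\C$ nilpotent is exactly what makes the flat-connection hypothesis checkable (Lemma~\ref{lem:kflat}) by reduction to the nilpotent case of \cite{MPPS}.

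The second gap is that you dismiss the verification of both hypotheses as ``essentially citation.'' Neither is. The equality $\F(A,\g)_{(0)}=\F^1(A,\g)_{(0)}$ is \emph{not} in \cite{MPPS} for solvable Lie algebras (that reference covers only the nilpotent case, and Lemma~\ref{lem:metab}\eqref{m2} of the present paper shows the global equality genuinely fails for non-nilpotent metabelian $\h$); for the cochain algebra of a solvable Lie algebra it requires the inductive argument of Lemma~\ref{lem:solvsl2}, and for the Kasuya model it requires the nilshadow embedding. Likewise, trivial resonance for the solvmanifold model is established in Lemma~\ref{lem:kres} via the finiteness criterion of \cite{DPS14} together with nontrivial facts about polycyclic groups from \cite{R} and a transfer argument; it does not follow from a ``direct Chevalley--Eilenberg computation'' since, again, that complex is not a model of $M$. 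To repair your argument you should replace your $A$ by the Kasuya model and supply proofs (or precise references within the paper) for Lemmas~\ref{lem:kflat} and~\ref{lem:kres}.
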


This extends the computation done in \cite{MPPS} for nilmanifolds, by replacing Nomizu's model \cite{N} of
such a manifold with a model found by Kasuya \cite{K1} for an arbitrary solvmanifold. As explained in \cite{R},
lattices in solvable Lie groups are much more complicated than those in nilpotent Lie groups. Nevertheless,
Theorem \ref{thm:intro1} makes things work well. Note that the first assumption from Theorem \ref{thm:intro1} 
(on rank one resonance) also appears in Theorem $1$ from \cite{DPS14}: it is equivalent to the fact that all
completed Alexander invariants of $M$ are finite-dimensional, for an arbitrary compact manifold $M$ modeled by $A$.
To check it for solvmanifolds, we prove in Theorem \ref{thm:charpoly} a result of independent interest: 
each rank one characteristic variety $\VV^i_1(M)$ is finite, when $M$ is the classifying space of a virtually polycyclic group. 

Next, we recall that Nomizu's model for the nilmanifold $M=S/\G$ is the Chevalley--Eilenberg cochain $\cdga$,
$\CC^{\hdot}(\ss \otimes \C)$, of the nilpotent Lie algebra $\ss \otimes \C$, where $\ss$ is the Lie algebra of $S$. 
In Section \ref{sec:lie}, we apply
Theorem \ref{thm:intro1} to another large class of examples: cochain $\cdga$'s of arbitrary 
finite-dimensional Lie algebras. Denoting by $H^\hdot (\h)$ untwisted Lie algebra cohomology, we obtain in
Theorem \ref{thm:jumplie} the following complete description of embedded germs at $0$ of rank two algebraic
Green--Lazarsfeld loci, for this class. 

\begin{theorem}[Theorem \ref{thm:jumplie}]
\label{thm:intro3}
Let $\h$ be a finite-dimensional complex Lie algebra and let $\theta\colon \sl_2 \to \gl(V)$ be
a finite-dimensional Lie representation with $V\ne 0$. Then $\F (\CC^\hdot \h, \sl_2)_{(0)}$ is
equal to $\{ 0\}$ when $H^1 (\h)=0$, and otherwise is isomorphic to $\cone (\PP(H^1(\h))\times \PP(\sl_2))_{(0)}$.
In the first case, $\RR^i_1( \CC^\hdot \h, \theta)_{(0)}$ is empty or equal to  $\{ 0\}$, depending on
whether $H^i (\h)$ vanishes or not. In the second case, the embedded germs of depth $1$ resonance varieties are given by
\begin{equation*}
\RR^i_1(\CC^\hdot \h, \theta)_{(0)} =
\begin{cases}
\emptyset & \text{if $H^i (\h)=0$,}\\
\cone (\PP(H^1(\h))\times V(\det \circ \theta))_{(0)}
& \text{otherwise.}
\end{cases}
\end{equation*}
\end{theorem}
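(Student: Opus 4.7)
The plan is to apply Theorem \ref{thm:intro1} to the cdga $A = \CC^\hdot \h$ and the Lie algebra $\g = \sl_2$, using the standard identification $\F(\CC^\hdot \h, \sl_2) = \Hom_{\mathrm{Lie}}(\h, \sl_2)$. Two hypotheses must be verified: (i) that $0$ is isolated in $\bigcup_{i \geq 0} \RR^i_1(\CC^\hdot \h)$; and (ii) that $\F(\CC^\hdot \h, \sl_2)_{(0)} = \F^1(\CC^\hdot \h, \sl_2)_{(0)}$. For (i), the twisted complex $(\CC^\hdot \h, d_\omega)$ for $\omega \in Z^1 \cong H^1(\h)$ computes the Lie algebra cohomology $H^*(\h, \C_\omega)$, and I would establish, as a separate finiteness lemma analogous to Theorem \ref{thm:charpoly}, that $H^*(\h, \C_\omega) \neq 0$ only for $\omega$ in a finite subset of $H^1(\h)$, reducing via Hochschild--Serre to the classical nilpotent case.

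For (ii), assume first that $H^1(\h) \neq 0$. The key point is that every Lie homomorphism $\omega : \h \to \sl_2$ sufficiently close to $0$ has image of dimension $\leq 1$ and is thus of the form $\omega = \eta \otimes g$ with $\eta \in Z^1$ and $g \in \sl_2$. Since the nonzero Lie subalgebras of $\sl_2$ have dimensions $1$, $2$ (Borel $B$), or $3$, the task reduces to showing that Lie homomorphisms with image $=\sl_2$ or image $=B$ cannot accumulate at $0$. In the surjective case, Levi decomposition forces $\omega$ to kill the solvable radical of $\h$ and factor through an $\sl_2$-direct-summand of the Levi quotient, giving a finite union of $\PSL_2$-conjugacy classes whose closure in $\Hom(\h, \sl_2)$ does not contain $0$. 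The Borel-image case factors as $\h \twoheadrightarrow \h/\ker\omega \isom B \hookrightarrow \sl_2$ and is handled similarly, using the rigidity of isomorphisms $\h/\ker\omega \to B$ together with the fact that the scaling $t\,\omega$ of a non-abelian Lie homomorphism fails to be a Lie homomorphism for $t \notin \{0, 1\}$.

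With (i) and (ii) verified, Theorem \ref{thm:intro1} yields $\RR^i_1(\CC^\hdot \h, \theta)_{(0)} = \Pi(\CC^\hdot \h, \theta)_{(0)}$ whenever $H^i(\h) \neq 0$. Identifying $\F^1(\CC^\hdot \h, \sl_2)$ with the affine cone on the Segre product $\PP H^1(\h) \times \PP \sl_2$ via $(\eta, g) \mapsto \eta \otimes g$, and $\Pi(\CC^\hdot \h, \theta)$ with the sub-cone on $\PP H^1(\h) \times V(\det \circ \theta)$, yields the stated formulas. The case $H^1(\h) = 0$ is more direct: $\h = [\h, \h]$ forces any Lie homomorphism $\h \to \sl_2$ to have perfect image (hence $0$ or $\sl_2$), and surjective ones are bounded away from $0$ by the Levi argument above; thus $\F(\CC^\hdot \h, \sl_2)_{(0)} = \{0\}$, and $\RR^i_1(\CC^\hdot \h, \theta)_{(0)} \subseteq \{0\}$ equals $\{0\}$ exactly when $H^i(\h) \neq 0$. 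The empty-germ cases when $H^i(\h) = 0$ follow from upper semicontinuity of $\omega \mapsto \dim H^i(\CC^\hdot \h \otimes V, d_\omega)$ at $\omega = 0$.

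The principal obstacle is the Borel-image part of step (ii): ruling out analytic families $\omega_t$ of Lie homomorphisms $\h \to \sl_2$ with $\omega_t \to 0$ whose images are the $2$-dimensional Borel $B$. The leading-order Maurer--Cartan obstruction---vanishing of a certain cup product in $H^2(\h)$---is necessary but not always sufficient, as the Heisenberg example (where the cup product on $H^1$ is identically zero, yet direct computation still excludes Borel-image Lie homomorphisms) makes clear. One must combine it with the higher-order compatibility dictated by the full Chevalley--Eilenberg differential to force the two $B$-components of $\omega_t$ to become proportional as $t \to 0$, hence $\omega_t \in \F^1$.
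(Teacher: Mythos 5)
Your overall architecture is the same as the paper's: verify the two hypotheses of Theorem \ref{thm:intro1} for $A=\CC^\hdot\h$, $\g=\sl_2$, and then read off the answer from the Segre-cone description of $\F^1$ and $\Pi$. Your treatment of hypothesis (i) is an acceptable sketch (the paper instead uses a Levi decomposition, the Grothendieck--Hochschild--Serre spectral sequence, and Millionschikov's finiteness theorem for solvable $\ss$; note that your ``reduce to the nilpotent case'' must in any event pass through the semisimple quotient, for which you need $H^1(\g,U)=0$, and through rank-one solvable extensions rather than literally the nilpotent case). Your surjective-image case of (ii) is also essentially the paper's Lemma \ref{lem:nr} and Proposition \ref{prop:homgerms}: semisimple rigidity (Nijenhuis--Richardson) kills the Levi component near $0$.

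The genuine gap is the one you flag yourself: the Borel-image case of (ii), i.e., showing that Lie homomorphisms $\h\to\sl_2$ with two-dimensional image do not accumulate at $0$. This is the technical heart of the whole theorem, and neither of your two proposed mechanisms closes it. The ``rigidity of isomorphisms $\h/\ker\omega\to B$ plus non-scalability of non-abelian homomorphisms'' argument fails because a family $\omega_t\to 0$ with Borel image need not be a scaling family and $\ker\omega_t$ may vary with $t$, so there is no fixed quotient to which rigidity applies; and the cup-product/Maurer--Cartan obstruction is, as you concede, only a necessary leading-order condition, with no argument supplied for the ``higher-order compatibility''. What actually makes this work in the paper is a concrete computation: after reducing (via Proposition \ref{prop:homgerms}) to a solvable $\widetilde{\ss}$ and inducting on the derived series (Lemma \ref{lem:solvsl2}), one is left with metabelian algebras $V\rtimes_\alpha\C$, and Lemma \ref{lem:metab} shows that any $\varphi$ of rank $\ge 2$ must send the generator $u$ of the $\C$-factor to $\tfrac{\lambda}{2\epsilon}H$ for some \emph{nonzero eigenvalue} $\lambda$ of $\alpha$; hence $\det(\varphi u)=-\lambda^2/4$ is bounded away from $0$ over the finite set of eigenvalues, which is exactly the quantitative separation you need. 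Without this (or an equivalent explicit analysis), the claimed equality $\F(\CC^\hdot\h,\sl_2)_{(0)}=\F^1(\CC^\hdot\h,\sl_2)_{(0)}$ is unproved, and the main statement does not follow.
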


When $A^\hdot =\CC^\hdot \h$, the analysis of the second hypothesis from Theorem \ref{thm:intro1}
(on flat connections) turns out to be the more difficult part. We know from \cite {MPPS} that 
$\F (A, \sl_2)= \F^1 (A, \sl_2)$, when $\h$ is nilpotent. We first extend this property to the solvable case,
where we prove that $\F (\CC^\hdot \h, \sl_2)_{(0)}=\F^1 (\CC^\hdot \h, \sl_2)_{(0)}$. Along the way, 
we show in Lemma \ref{lem:metab}\eqref{m2} that the global property 
$\F (\CC^\hdot \h, \sl_2)=\F^1 (\CC^\hdot \h, \sl_2)$ actually characterizes the nilpotence of $\h$,
within a certain metabelian class. Finally, we associate to an arbitrary finite-dimensional Lie algebra $\h$
a solvable Lie algebra $\widetilde{\ss}$, and we prove in Proposition \ref{prop:homgerms} that,
for any finite-dimensional Lie algebra $\kk$, there is a natural analytic isomorphism,
$\F (\CC^\hdot \h, \kk)_{(0)} \simeq \F (\CC^\hdot \, \widetilde{\ss}, \kk)_{(0)}$. This leads to the equality
$\F (\CC^\hdot \h, \sl_2)_{(0)}=\F^1 (\CC^\hdot \h, \sl_2)_{(0)}$, in the general case.

\section{Vanishing rank one resonance}
\label{sec:gral}

We start by isolating a useful property of rank $1$ resonance. In the sequel, this will enable us 
to treat simultaneously the germs at the origin of rank $2$ jump loci, for both solvmanifolds and Lie algebras.

First, we need to review a couple of basic definitions and facts related to algebraic jump loci, 
following \cite{DP-ccm} and \cite{MPPS}. Unless otherwise mentioned, our ground ring will be $\C$.

Let $A=(A^\hdot, d)$ be a $\cdga$, whose defining axioms capture the essential algebraic features 
of the de Rham algebra of a differentiable manifold. Our standard assumption is that $A$ is
finite-dimensional (as a vector space) and connected (i.e., $A^0=\C\cdot 1$). For a finite-dimensional Lie 
algebra $\g$, we denote by $\F(A,\g)\subseteq A^1\otimes \g$ the variety of $\g$-valued flat connections on $A$,
given by the solutions of the Maurer--Cartan equation, $d\omega + \frac{1}{2} [\omega, \omega]=0$. 
This construction gives a Zariski closed subset  of $A^1\otimes \g$ containing the distinguished point $0$,
and is natural in both $A$ and $\g$. When $\g$ is abelian, $\F(A,\g)= H^1(A)\otimes \g$, since $A$ is connected. Otherwise,
the singularity $\F(A,\g)_{(0)}$ can be pretty complicated. We will denote $\F(A,\C)$ simply by $\F(A)$.

Now, let $\theta\colon \g \to \gl(V)$ be a finite-dimensional Lie representation with $V\ne 0$. 
For $\omega \in \F(A,\g)$, the covariant derivative $d_{\omega}$ acts on $A^\hdot \otimes V$ (via $\theta$),
in the following way. Write $\omega =\sum_i \eta_i \otimes g_i \in A^1\otimes \g$. Then 
$d_{\omega}(\eta \otimes v)=d\eta \otimes v+\sum_i \eta_i \eta\otimes \theta(g_i)v$, for $\eta \otimes v\in A^\hdot \otimes V$.
The flatness condition implies that $d_{\omega}^2=0$. We may thus speak for each degree $i$ 
about the descending filtration of $\F(A,\g)$ by the depth $r$ {\em resonance varieties}
\begin{equation}
\label{eq:defres}
\RR^i_r(A, \theta)= \{\omega \in \F(A,\g)
\mid \dim H^i(A \otimes V, d_{\omega}) \ge  r\}\, ,
\end{equation}
which are Zariski closed in $\F(A,\g)$. We are going to pay particular attention to the depth $1$ resonance 
varieties, since their germs at $0$ are isomorphic to the germs at $1$ of the corresponding 
topological Green--Lazarsfeld loci $\VV^i_1(M, \iota)$, when $A$ models $M$. 

The simplest case is the {\em rank one} case, corresponding to $\theta=\id_{\C}$, when 
$\RR^i_r(A, \theta)\subseteq  \F(A,\C)$ is denoted simply by $\RR^i_r(A)\subseteq  \F(A)$,
and $\F(A)= H^1(A)$. This case is much more studied 
than the non-abelian situation. Our starting point in this note is to identify properties of rank $1$
resonance that give valuable information on higher rank resonance. A simple useful remark in this direction, 
made in \cite{MPPS}, is that
\begin{equation}
\label{eq:germs12}
\RR^i_1(A, \theta)_{(0)} \neq \emptyset \same \RR^i_1(A)_{(0)} \neq \emptyset \same H^i(A)\neq 0\, .
\end{equation}

More information comes from considering the quadratic map
\begin{equation}
\label{eq:pmap}
P\colon A^1 \times \g \longrightarrow A^1 \otimes \g \, , \quad (\eta, g) \mapsto \eta\otimes g\, ,
\end{equation}
the induced map $P\colon H^1(A) \times \g \longrightarrow \F(A, \g)$, and the {\em (essentially) rank one}
part $\F^1(A, \g):= P(H^1(A) \times \g)$, which is Zariski closed in $\F(A, \g)$ and contains the origin $0$.
Define $\Pi(A,\theta):= P(H^1(A) \times V(\det \circ \theta))$, where $\det :\gl (V) \to \C$ is the deteminant.
Again, $\Pi(A,\theta)$ is Zariski closed in $\F^1(A, \g)$ and contains  $0$. Moreover, as shown in \cite{MPPS},
$H^i(A)\ne 0$ implies that
\begin{equation}
\label{eq:pibound}
\Pi(A,\theta) \subseteq \RR^i_1(A, \theta)\, .
\end{equation}

This higher rank resonance bound actually follows from a more precise result.

\begin{theorem}[\cite{MPPS}]
\label{thm:main0}
Let $\omega=\eta \otimes g$ with $d\eta=0$ be an arbitrary element of $\F^1(A, \g)$.
Then $\omega\in \RR^i_1(A, \theta)$ if and only if there is an eigenvalue $\lambda$ of $\theta (g)$
such that $\lambda \eta\in \RR^i_1(A)$. 
\end{theorem}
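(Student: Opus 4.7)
My plan is to reduce to an eigenspace-by-eigenspace analysis of $\theta(g)$, and then to compare the resulting small complex with its scalar rank-one counterpart by combining two complementary short exact sequences. First, since $\id_A \otimes \theta(g)$ commutes with $d_\omega = d \otimes \id_V + (\eta \wedge \cdot) \otimes \theta(g)$, the generalized eigenspace decomposition $V = \bigoplus_\lambda V_\lambda$ splits the complex as $(A \otimes V, d_\omega) = \bigoplus_\lambda (A \otimes V_\lambda, d_\omega)$; refining further by the Jordan block decomposition of each $V_\lambda$, it suffices to prove, for each eigenvalue $\lambda$ of $\theta(g)$ and each Jordan block $V$ of size $k$ with eigenvalue $\lambda$, that $H^i(A \otimes V, d_\omega) \ne 0$ if and only if $H^i(A, d_{\lambda\eta}) \ne 0$.

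The ``only if'' direction follows from the filtration of $V$ by $F_p = \ker(\theta(g) - \lambda)^p$: the associated graded of the induced filtration of $(A \otimes V, d_\omega)$ is a direct sum of $k$ copies of $(A, d_{\lambda\eta})$, so the convergent spectral sequence gives the bound $\dim H^i(A \otimes V, d_\omega) \le k \cdot \dim H^i(A, d_{\lambda\eta})$.

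For the ``if'' direction, I would induct on $k$, with trivial base $k = 1$. For the step, use the two short exact sequences of complexes
\begin{equation*}
0 \to (A, d_{\lambda\eta}) \xrightarrow{a \mapsto a \otimes v_1} (A \otimes V, d_\omega) \to (A \otimes V/\langle v_1 \rangle, d_\omega) \to 0,
\end{equation*}
\begin{equation*}
0 \to (A \otimes U, d_\omega) \to (A \otimes V, d_\omega) \to (A, d_{\lambda\eta}) \to 0,
\end{equation*}
where $v_1, \ldots, v_k$ is a Jordan basis (so $(\theta(g) - \lambda)v_j = v_{j-1}$), $U = \langle v_1, \ldots, v_{k-1} \rangle$, and both $V/\langle v_1 \rangle$ and $U$ are Jordan blocks of size $k-1$ with eigenvalue $\lambda$, identified via $\bar v_{j+1} \leftrightarrow v_j$. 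The connecting morphisms compute explicitly as $\delta_1[\bar x] = [\eta \bar x_2]$ (where $\bar x_2$ is the $\bar v_2$-coefficient of a cocycle representing $\bar x$) and $\delta_2[\alpha] = [\eta \alpha \otimes v_{k-1}]$. Assuming for contradiction that $H^i(A \otimes V, d_\omega) = 0$ while $H^i(A, d_{\lambda\eta}) \ne 0$, the first long exact sequence forces $\delta_1$ to be injective in degree $i$ and the second forces $\delta_2$ to be surjective onto $H^i(A \otimes U, d_\omega)$. A direct calculation shows $\delta_1 \circ \delta_2 = 0$: for $k = 2$ this uses $\eta \wedge \eta = 0$, and for $k > 2$ it follows from the Jordan index mismatch (the output of $\delta_2$ sits at the top basis vector $\bar v_k$ while $\delta_1$ only reads the $\bar v_2$-coefficient). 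Injectivity of $\delta_1$ then forces $\delta_2 = 0$, whence surjectivity of $\delta_2$ gives $H^i(A \otimes U, d_\omega) = 0$, and the inductive hypothesis applied to $U$ yields $H^i(A, d_{\lambda\eta}) = 0$, the desired contradiction.

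The technical heart of the argument, and the main obstacle, is the explicit computation of $\delta_1$ and $\delta_2$ at the level of cohomology: verifying well-definedness requires the identity $d_{\lambda\eta}(\eta \gamma) = -\eta \cdot d_{\lambda\eta}(\gamma)$, which holds because $\eta$ is a closed 1-form, and this identity substitutes for the derivation property that $d_{\lambda\eta}$ lacks when $\lambda \ne 0$.
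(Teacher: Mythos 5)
Your argument is correct. Note first that the paper offers no proof of this statement to compare against: Theorem \ref{thm:main0} is imported verbatim from \cite{MPPS}, so your proposal has to stand on its own, and it does. The reduction via the generalized eigenspace and Jordan block decomposition of $\theta(g)$ (legitimate because $\id_A\otimes\theta(g)$ commutes with $d_\omega$) and the filtration by $\ker(\theta(g)-\lambda)^p$, whose associated graded is $k$ copies of $(A,d_{\lambda\eta})$, are exactly the natural first moves and give the upper bound, hence the ``only if'' direction. For the ``if'' direction, I checked the two connecting maps: lifting a cocycle $\sum_{j\ge 2}x_j\otimes \bar v_j$ to $A\otimes V$ and applying $d_\omega$ leaves only the $v_1$-component $\eta x_2\otimes v_1$, confirming $\delta_1[\bar x]=[\eta x_2]$; and $d_\omega(\alpha\otimes v_k)=\eta\alpha\otimes v_{k-1}$ for a $d_{\lambda\eta}$-cocycle $\alpha$, confirming $\delta_2[\alpha]=[\eta\alpha\otimes v_{k-1}]$. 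The identification $v_j\leftrightarrow\bar v_{j+1}$ of $U$ with $V/\langle v_1\rangle$ does intertwine $\theta(g)$, so composing is legitimate, and $\delta_1\circ\delta_2=0$ holds as you say (for $k=2$ via $\eta\wedge\eta=0$, for $k>2$ by the index mismatch; one can also see it directly since $d(\eta\alpha)=\lambda\eta^2\alpha=0$, so the lift $\eta\alpha\otimes v_k$ is already a cocycle in $A\otimes V$). Injectivity of $\delta_1$ on $H^i(A\otimes V/\langle v_1\rangle)$ plus surjectivity of $\delta_2$ onto $H^i(A\otimes U)$ then kill $H^i(A\otimes U)$, and the inductive hypothesis closes the contradiction. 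Two small remarks: well-definedness of $\delta_1,\delta_2$ is automatic since they are connecting maps of short exact sequences of complexes, so the identity $d_{\lambda\eta}(\eta\gamma)=-\eta\, d_{\lambda\eta}(\gamma)$ is only needed if you insist on verifying the explicit formulas representative by representative; and finite-dimensionality of $A$ is what makes all the cohomology groups finite-dimensional, though your argument never actually uses dimension counts beyond vanishing versus nonvanishing.
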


In coordinates, consider the quadratic map
\begin{equation}
\label{eq:pcoord}
P\colon \C^m\times \C^n\longrightarrow \C^{mn}\equiv \C^m\otimes \C^n\, , \quad (x,y)\mapsto z\, ,
\end{equation}
where $z_{ij}= x_iy_j$. We denote its image by $\F^1(m,n)$. Clearly, $\F^1(m,n)= \F^1(A,\g)$, 
when $\dim H^1(A)=m$ and $\dim \g=n$. 
As is well-known, $\F^1(m,n)\simeq \cone(\PP^{m-1} \times \PP^{n-1})$
is Zariski closed in $\C^m\otimes \C^n$.

Clearly, $P^{-1}(0)=  \C^m\times 0 \bigcup 0\times \C^n$.
To describe the other fibers, let us consider the $\C^{\times}$-action defined by
$t\cdot (x,y)= (t^{-1}x, ty)$, for $t\in \C^{\times}$. Then $P(x,y)=P(x',y')$ if and only if
$(x,y)$ and $(x',y')$ lie in the same $\C^{\times}$-orbit, when $P(x,y)\ne 0$. 

We will use the following nice property of the regular map \eqref{eq:pcoord}.

\begin{lemma}
\label{lem:pnice}
A regular function $f$ on $\C^m\times \C^n$ factors through the surjection 
$P\colon \C^m\times \C^n \surj \F^1(m,n)$ if and only if $f$ is $\C^{\times}$-invariant.
If this is the case, then the (unique) quotient function $F$ is regular on $\F^1(m,n)$.
\end{lemma}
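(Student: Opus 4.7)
The plan rests on the explicit fiber description of $P$ recorded in the paragraph preceding the lemma: $P^{-1}(0)=(\C^M\times 0)\cup(0\times \C^N)$, while for $(x,y)$ with $P(x,y)\ne 0$ the fiber is exactly its $\C^{\times}$-orbit $\{(t^{-1}x,ty):t\in \C^{\times}\}$. One checks this directly from the coordinate formula $z_{ij}=x_iy_j$: two nonzero rank-one tensors $x\otimes y$ and $x'\otimes y'$ agree if and only if $(x',y')=(t^{-1}x,ty)$ for a unique $t\in \C^{\times}$.

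For the biconditional, the "only if" direction is immediate: if $f=F\circ P$ set-theoretically, then $f$ is constant on the fibers of $P$, hence $\C^{\times}$-invariant wherever $P\ne 0$, and constant on $P^{-1}(0)$, which is itself $\C^{\times}$-stable; together these give $\C^{\times}$-invariance on all of $\C^M\times \C^N$. Conversely, if $f$ is $\C^{\times}$-invariant, the one-variable polynomials $t\mapsto f(0,ty)$ and $t\mapsto f(t^{-1}x,0)$ are constant in $t\in \C^{\times}$, forcing $f(x,0)=f(0,y)=f(0,0)$ for all $x,y$. Thus $f$ is constant on $P^{-1}(0)$, and so is constant on every fiber of $P$; this produces a unique function $F$ on $\F^1(M,N)$ with $f=F\circ P$.

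To finish, I would establish regularity of $F$ by a weight decomposition. With respect to $t\cdot(x,y)=(t^{-1}x,ty)$, a monomial $x^\alpha y^\beta$ transforms under $t$ by the scalar $t^{|\beta|-|\alpha|}$, so $\C^{\times}$-invariance selects precisely those monomials with $|\alpha|=|\beta|$. Each such monomial factors as a product of the generators $x_iy_j=P(x,y)_{ij}$, whence $f=G(x_iy_j)$ for some polynomial $G\in \C[z_{ij}]$, and the restriction $F:=G|_{\F^1(M,N)}$ is visibly a regular function on $\F^1(M,N)$ satisfying $F\circ P=f$. The main obstacle lies precisely in this last identification — the classical first fundamental theorem for this torus action, asserting $\C[x,y]^{\C^{\times}}=\C[x_iy_j]$ — which is what upgrades the set-theoretic factorization to an algebraic one; the rest of the argument is a direct unpacking of the fiber structure of $P$.
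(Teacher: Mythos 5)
Your argument is correct and follows essentially the same route as the paper: the ``only if'' direction from $P(t\cdot(x,y))=P(x,y)$, the ``if'' direction from the weight decomposition $f=\sum_{\abs{I}=\abs{J}}c_{IJ}x_Iy_J$ together with the fiber description of $P$, and regularity of $F$ from the fact that the $\C^{\times}$-invariants are generated by the monomials $x_iy_j$. The only cosmetic difference is that you verify constancy on $P^{-1}(0)$ by a limiting argument rather than reading it off the weight decomposition, which changes nothing of substance.
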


\begin{proof}
If $f$ factors, then clearly $f$ is $\C^{\times}$-invariant, since $P(t\cdot (x,y))=P(x,y)$,
for all $t\in \C^{\times}$ and $(x,y)\in \C^m\times \C^n$. Conversely, $\C^{\times}$-invariance
means that the polynomial $f$ is of the form $f(x,y)= \sum_{\abs{I}=\abs{J}} c_{IJ}x_Iy_J$
where $(I,J)\in \N^m\times \N^n$. This plainly implies factorization, according to the above
description of the fibers of $P$. The second claim follows from the fact that the algebra of
$\C^{\times}$-invariant polynomials is generated by the monomials $x_iy_j$. 
\end{proof}

\begin{definition}
\label{def:zerores}
We say that the $\cdga$ $A$ has {\em trivial resonance} in degree $i$ if $0$ is an isolated point of
$\RR^i_1(A)$. In other words, $\RR^i_1(A)= \{ 0\} \cup \bigcap_{\alpha} (V(\varphi_{\alpha}) \cap H^1(A))$,
where $\{ \varphi_{\alpha}\}$ are polynomial functions on $A^1$ and there is $\varphi_0$
such that $\varphi_0 (0)\ne 0$. 
\end{definition}

This property has the following significant higher rank consequence.

\begin{theorem}
\label{thm:trivres}
If $A$ has trivial resonance in degree $i$, then $\Pi(A,\theta)$ and $\RR^i_1(A, \theta)\cap \F^1 (A,\g)$ 
have the same germ at $0$, for any representation $\theta\colon \g\to \gl(V)$.
\end{theorem}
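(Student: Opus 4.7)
The plan is to combine Theorem \ref{thm:main0} with an analytic normalization on the source of the quadratic map $P\colon H^1(A)\times\g \surj \F^1(A,\g)$. When $H^i(A)\ne 0$, the inclusion $\Pi(A,\theta)\subseteq\RR^i_1(A,\theta)\cap\F^1(A,\g)$ is \eqref{eq:pibound}, so the real content is the reverse inclusion of analytic germs at $0$; when $H^i(A)=0$, both germs become trivial by \eqref{eq:germs12}.

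Given $\omega\in(\RR^i_1(A,\theta)\cap\F^1(A,\g))\setminus\{0\}$ sufficiently close to $0$, the fiber description preceding Lemma \ref{lem:pnice} shows that $P^{-1}(\omega)$ is a single $\C^\times$-orbit $\{(t^{-1}\eta,tg):t\in\C^\times\}$ in which every representative has $\eta\ne 0$ and $g\ne 0$. Fixing norms on $H^1(A)$ and $\g$, I would choose $t\in\C^\times$ with $|t|^2=\|\eta\|/\|g\|$, so the rescaled lift $(\eta',g'):=(t^{-1}\eta,tg)$ is balanced: $\|\eta'\|=\|g'\|=\sqrt{\|\eta\|\,\|g\|}$. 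Since the tensor norm $\|\omega\|$ is comparable to $\|\eta'\|\,\|g'\|=\|\eta'\|^2$, this lift satisfies $\|\eta'\|,\|g'\|\to 0$ as $\omega\to 0$; this transfers the smallness of $\omega$ to smallness of its lift.

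With this controlled lift, Theorem \ref{thm:main0} applied to the decomposition $\omega=\eta'\otimes g'$ produces an eigenvalue $\lambda$ of $\theta(g')$ with $\lambda\eta'\in\RR^i_1(A)$. The eigenvalues of the linear endomorphism $\theta(g')$ are bounded by $C\|g'\|$ for some constant $C$, so $\|\lambda\eta'\|\le C\|\eta'\|\,\|g'\|=C\|\eta'\|^2\to 0$ as $\omega\to 0$. The trivial resonance hypothesis supplies a neighborhood $W$ of $0$ in $H^1(A)$ with $W\cap\RR^i_1(A)\subseteq\{0\}$; hence for $\omega$ small enough we have $\lambda\eta'=0$, and since $\eta'\ne 0$ this forces $\lambda=0$, whence $\det\theta(g')=0$, $g'\in V(\det\circ\theta)$, and $\omega=\eta'\otimes g'\in\Pi(A,\theta)$.

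The main obstacle is precisely the normalization step. The map $P$ is not a local homeomorphism near $0\in\F^1(A,\g)$, since its zero fiber is the union of the two coordinate subspaces $H^1(A)\times\{0\}$ and $\{0\}\times\g$; neither Theorem \ref{thm:main0} nor the trivial resonance hypothesis can be applied until one produces a well-controlled preimage of the given $\omega$. Once such a balanced lift is in hand, the rest is a uniform operator-norm bound on eigenvalues of $\theta(g')$ combined with the definition of trivial resonance.
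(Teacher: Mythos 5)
Your argument is correct, but it takes a genuinely different route from the paper's. The paper stays entirely algebraic: starting from the polynomial $\varphi_0$ of Definition \ref{def:zerores}, it forms $\widetilde{f}(\eta,\lambda)=\prod_j\varphi_0(\lambda_j\eta)$ over the eigenvalues of $\theta(g)$, observes that the resulting regular function on $H^1(A)\times\g$ is $\C^{\times}$-invariant, and uses Lemma \ref{lem:pnice} to descend it to a regular function $F$ on $A^1\otimes\g$ with $F(0)=\varphi_0(0)^{\dim V}\neq 0$; Theorem \ref{thm:main0} then gives the global inclusion $\RR^i_1(A,\theta)\cap\F^1(A,\g)\subseteq\Pi(A,\theta)\cup V(F)$, from which the germ equality follows. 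You instead argue metrically: you normalize the lift of $\omega$ under $P$ to a balanced representative $(\eta',g')$ with $\|\eta'\|=\|g'\|=\sqrt{\|\omega\|}$, bound the eigenvalues of $\theta(g')$ by an operator norm, and conclude from the isolatedness of $0$ in $\RR^i_1(A)$ that the eigenvalue supplied by Theorem \ref{thm:main0} must vanish once $\|\omega\|$ is below a uniform threshold. This bypasses Lemma \ref{lem:pnice} and the symmetric-function construction entirely, at the cost of producing only a Euclidean neighborhood on which the two sets agree rather than the complement of a hypersurface $V(F)$ missing the origin; since the paper works with analytic germs, that is sufficient, and both proofs ultimately rest on the same two inputs (Theorem \ref{thm:main0} and trivial resonance). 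One small correction: in the case $H^i(A)=0$ the two germs are \emph{not} both trivial --- $\Pi(A,\theta)$ always contains $0$, whereas $\RR^i_1(A,\theta)_{(0)}=\emptyset$ by \eqref{eq:germs12} --- but this case cannot occur here, since trivial resonance in degree $i$ presupposes $0\in\RR^i_1(A)$, i.e., $H^i(A)\neq 0$, so your proof of the theorem is unaffected.
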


\begin{proof}
By \eqref{eq:germs12} and \eqref{eq:pibound}, we know that 
$0\in \Pi(A,\theta) \subseteq \RR^i_1(A, \theta)\cap \F^1 (A,\g)$. Set $p=\dim V$ and consider the polynomial 
function on $A^1\times \C^p$, $\widetilde{f}(\eta, \lambda)= \prod_{i=1}^p \varphi_0 (\lambda_i \eta)$,
where $\varphi_0$ is chosen as in definition \ref{def:zerores}. Since $\widetilde{f}$ is symmetric in
$\lambda$, $\widetilde{f}(\eta, \lambda)=\overline{f}(\eta, \sigma)$, for some polynomial function $\overline{f}$ 
on $A^1\times \C^p$,  when $\sigma_i$ is the $i$-th elementary symmetric function $\sigma_i(\lambda)$, for
$i=1,\dots,p$. Next, write $\det (t\cdot \id -\theta(g))=t^p+ \sum_{i=1}^p (-1)^i c_i t^{p-i}$, where each
$c_i$ is a polynomial function in $g\in \g$.  Clearly, $c_i(g)=\sigma_i(\lambda)$, for all $i$, where 
$\lambda_1, \dots, \lambda_p$ are the eigenvalues of $\theta (g)$. Define the polynomial function $f$ on
$A^1\times \g$ by $f(\eta, g)= \overline{f}(\eta, c(g))$. By construction, $f(\eta, g)= \widetilde{f}(\eta, \lambda)$,
when $\lambda_1, \dots, \lambda_p$ are the eigenvalues of $\theta (g)$. 

We may thus apply Lemma \ref{lem:pnice} to the quadratic map \eqref{eq:pmap} and the regular function $f$. 
Let us check that $f$ factors through $P$. If $\eta=0$ or $g=0$, then clearly $f(\eta, g)=\varphi_0(0)^p$. 
It is equally clear that $f(t^{-1}\eta, tg)= f(\eta, g)$. Hence, we may find a regular function $F$ on $A^1\otimes \g$
such that $F\circ P=f$ and $F(0)=\varphi_0(0)^p \ne 0$. 

Our claim will follow from the inclusion $\RR^i_1(A, \theta)\cap \F^1 (A,\g) \subseteq \Pi(A,\theta)\bigcup V(F)$.
To prove this inclusion, assume that $\omega \in \RR^i_1(A, \theta)$, where $\omega =\eta \otimes g$ with $d\eta =0$,
$\eta\ne 0$ and $\det \theta(g)\ne 0$. We infer from Theorem \ref{thm:main0} that there is an eigenvalue $\lambda_j \ne 0$
of $\theta (g)$ such that $\lambda_j \eta\in \RR^i_1(A)$. Therefore, $\varphi_0 (\lambda_j \eta)=0$, 
according to definition \ref{def:zerores}. By the construction of $f$, $0= \widetilde{f}(\eta, \lambda)=f(\eta, g)$.
Finally, the factorization property of $f$ implies that $f(\eta, g)=F(\omega)$, and we are done.
\end{proof}

Under an additional hypothesis, the above result implies that the bound \eqref{eq:pibound} is sharp,
at the level of germs at the origin.

\begin{corollary}
\label{cor:pisharp}
If $A$ has trivial resonance in degree $i$ and $\F(A,\g)_{(0)}= \F^1 (A,\g)_{(0)}$, then
$\RR^i_1(A, \theta)_{(0)}= \Pi(A,\theta)_{(0)}$, for any representation $\theta\colon \g\to \gl(V)$. 
\end{corollary}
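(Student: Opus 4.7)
The plan is to deduce this corollary directly from Theorem \ref{thm:trivres}, using the flat connection hypothesis merely to upgrade the conclusion from $\RR^i_1(A,\theta)\cap \F^1(A,\g)$ to all of $\RR^i_1(A,\theta)$. The substantive work has already been carried out in Theorem \ref{thm:trivres}; what remains is a short formal deduction.

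First I would observe that by definition $\RR^i_1(A,\theta)\subseteq \F(A,\g)$, so passing to germs at $0$ gives $\RR^i_1(A,\theta)_{(0)} \subseteq \F(A,\g)_{(0)}$. The hypothesis $\F(A,\g)_{(0)}= \F^1(A,\g)_{(0)}$ then forces $\RR^i_1(A,\theta)_{(0)} \subseteq \F^1(A,\g)_{(0)}$, which is equivalent to the germ identity
\[
\RR^i_1(A,\theta)_{(0)} = \bigl(\RR^i_1(A,\theta)\cap \F^1(A,\g)\bigr)_{(0)}.
\]
Applying Theorem \ref{thm:trivres} to the right-hand side identifies this germ with $\Pi(A,\theta)_{(0)}$, which gives the desired conclusion. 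Note that the opposite inclusion $\Pi(A,\theta)_{(0)} \subseteq \RR^i_1(A,\theta)_{(0)}$ is automatic from \eqref{eq:pibound} (since $H^i(A)\ne 0$ is guaranteed by trivial resonance in degree $i$ together with $\Pi(A,\theta)\ne \emptyset$; otherwise both germs are $\{0\}$ and there is nothing to check).

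There is no genuine obstacle: the role of the second hypothesis is precisely to reduce the general problem of depth $1$ resonance near the origin to its restriction to the essentially rank one stratum, where Theorem \ref{thm:trivres} applies. The conceptual content of the corollary is that the two assumptions, one on rank one resonance and one on the local structure of the flat connection variety, are together enough to pin down all rank two jump loci up to analytic isomorphism at $0$.
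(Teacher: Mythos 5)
Your deduction is correct and is exactly the argument the paper intends: the corollary is stated as an immediate consequence of Theorem \ref{thm:trivres}, with the hypothesis $\F(A,\g)_{(0)}=\F^1(A,\g)_{(0)}$ serving precisely to identify $\RR^i_1(A,\theta)_{(0)}$ with $\bigl(\RR^i_1(A,\theta)\cap\F^1(A,\g)\bigr)_{(0)}$. No differences worth noting.
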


The trivial resonance property is also related to delicate finiteness properties of Alexander invariants of spaces
\cite{PS-mrl, DPS14}. For $\cdga$'s with $d=0$, it follows from definition \eqref{eq:defres} that all resonance
varieties $\RR^i_r(A, \theta)$ are homogeneous. In particular, in this case, triviality of resonance in degree $i$ is
equivalent to $\RR^i_1(A)=\{ 0\}$. 
As shown in \cite{PS-mrl}, examples of this sort abound. More precisely, the $\cdga$'s with $d=0$
and fixed Betti numbers $b_i=\dim A^i$ may be viewed as the points of an affine variety. On this
parameter space, triviality of resonance in degree $i$ is a Zariski open condition, when $b_i>0$. Moreover, for $i=1$
and $b_2 \ge 2b_1 -3\ge -1$, this open set is non-void, as follows from Theorem 1.1 in \cite{PS-crelle}. 
In the next sections, we will exhibit
new classes of examples, with trivial resonance and non-zero differential. 

\begin{example}
\label{ex:free}
For $n\ge 2$, let $M= \C\setminus \{ \text{$n$ points} \}$ be the classifying space of a finitely generated free
non-abelian group. It is well-known that in this case $A=(H^\hdot(M), d=0)$ is a model of $M$, and
$\RR^1_1(A)=\C^n$. Hence, $A$ does not have trivial resonance in degree $1$. 

This simple example also shows that the second assumption from Corollary \ref{cor:pisharp} is not always satisfied.
Indeed, for an arbitrary $\cdga$ $A=(A^\hdot, d)$ and $\omega =\sum_i \eta_i \otimes g_i \in A^1 \otimes \g$,
the Maurer--Cartan equation is equivalent to 
\begin{equation}
\label{eq:mc}
\sum_{i} d\eta_i \otimes g_i +
\sum_{i<j} \eta_i \eta_j \otimes [g_i, g_j] =0.
\end{equation}

In our example, \eqref{eq:mc} shows that $\F(A,\g)= A^1 \otimes \g$. When $\dim \g>1$, 
it follows that $\F(A,\g)_{(0)} \ne \F^1 (A,\g)_{(0)}$.
\end{example}

\section{Solvmanifolds}
\label{sec:solv}

In this section, we apply the general theory to solvmanifolds, where it leads to a complete description
of germs at $1$ for both varieties of $\SL_2$-representations and topological Green--Lazarsfeld loci.
We also show that the rank one topological Green--Lazarsfeld sets of virtually polycyclic groups are finite.

A {\em solv-lattice} is a discrete, co-compact subgroup $\G$ of a $1$-connected solvable real Lie group $S$,
giving rise to the compact, aspherical solvmanifold $M=S/\G$, with fundamental group $\G$. For such a
manifold, Kasuya constructed in \cite{K1} a connected, finite-dimensional $\cdga$ model. Since the
details are rather complicated, we are going to extract only the properties of the {\em Kasuya model} $A$
of $M$ that are relevant to our study, following \cite{K1,K3}. 

Let $\n$ be the nilshadow of the solvable Lie algebra $\ss$ of $S$. The model $A^\hdot$ is a sub-$\cdga$
of the Chevalley-Eilenberg cochain algebra of the finite-dimensional nilpotent Lie algebra $\n\otimes \C$,
$B^\hdot:= \CC^{\hdot}(\n\otimes \C)$. This information is enough to prove that $A$ satisfies the 
second assumption from Corollary \ref{cor:pisharp} globally, for $\g=\sl_2$. 

\begin{lemma}
\label{lem:kflat}
If $A$ is a Kasuya solvmanifold model and $\g=\sl_2$, then $\F(A,\g)=\F^1 (A,\g)$.
\end{lemma}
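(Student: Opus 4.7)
My plan is to exploit the defining feature of Kasuya's construction recalled in the paragraph just before the lemma: $A$ embeds as a sub-$\cdga$ of $B := \CC^\hdot(\n\otimes\C)$, the Chevalley--Eilenberg $\cdga$ of the nilshadow. Since the differential, the product, and hence the Maurer--Cartan equation on $A$ are the restrictions of those on $B$, this inclusion immediately gives a global inclusion $\F(A,\sl_2)\subseteq \F(B,\sl_2)$.

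I would then invoke the nilpotent case, which is exactly what \cite{MPPS} supplies: for the Chevalley--Eilenberg $\cdga$ of a nilpotent Lie algebra, $\F(B,\sl_2) = \F^1(B,\sl_2)$. Consequently, any $\omega\in \F(A,\sl_2)$ admits a decomposition $\omega = \eta'\otimes g$ with $\eta'\in B^1$ closed and $g\in \sl_2$.

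The remaining step is to descend $\eta'$ into $A^1$. View $\omega$ as the contraction map $\sl_2^\ast\to A^1$; this map takes values in $A^1$ by assumption, but its image, regarded inside $B^1$, is contained in the line $\C\cdot \eta'$. If $g=0$ then $\omega=0\in \F^1(A,\sl_2)$; otherwise the contraction has nonzero image, which forces $\eta'\in A^1$. Because $d_A$ is the restriction of $d_B$, the relation $d\eta'=0$ persists in $A$, and therefore $\omega\in \F^1(A,\sl_2)$, as required.

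The two structural ingredients, namely the sub-$\cdga$ embedding from Kasuya's model and the nilpotent case of \cite{MPPS}, carry all the weight. The only delicate point I foresee is the descent of $\eta'$ from $B^1$ to $A^1$, which is the elementary observation that a rank-one tensor in $B^1\otimes \sl_2$ that actually lies in $A^1\otimes \sl_2$ must have its $B^1$-factor, up to scalar, already in $A^1$. No additional computation appears necessary, and notably no passage to germs is used anywhere---the conclusion is global, as stated.
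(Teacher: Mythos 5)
Your proof is correct and follows essentially the same route as the paper's: both rest on the sub-$\cdga$ inclusion $A^\hdot\subseteq B^\hdot=\CC^\hdot(\n\otimes\C)$ together with the nilpotent-case equality $\F(B,\sl_2)=\F^1(B,\sl_2)$ from \cite{MPPS}. Your explicit descent of $\eta'$ from $B^1$ to $A^1$ is exactly the content of the paper's unexplained assertion that $\F^1(A,\g)=\F^1(B,\g)\cap A^1\otimes\g$, so you have merely spelled out a step the paper leaves implicit.
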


\begin{proof}
Since $A^\hdot \subseteq B^\hdot$, equation \eqref{eq:mc} readily implies that
$\F(A,\g)=\F(B,\g) \cap A^1\otimes \g$, for any $\g$. Similarly, $\F^1(A,\g)=\F^1(B,\g) \cap A^1\otimes \g$.
Finally, the nilpotence of $\n\otimes \C$ implies, for $\g=\sl_2$, the equality $\F(B,\g)=\F^1 (B,\g)$ \cite{MPPS}.
Our claim follows.
\end{proof}

To verify triviality of resonance for Kasuya models, we will use a general result from \cite{DPS14},
which we now recall. Let $M$ be a connected, finite CW-complex, with fundamental group $\pi$ and
universal abelian cover $M^{\ab}$. The action of $\pi_{\ab}$ by deck-transformations makes $H_{\hdot}(M^{\ab})$ 
a graded $\C [\pi_{\ab}]$-module, with $I$-adic completion denoted $\widehat{H}_{\hdot}(M^{\ab})$,
where $I$ is the augmentation ideal of the group ring $\C [\pi_{\ab}]$. Let $A$ be a $\cdga$ model of 
a connected, compact manifold $M$. Then $0$ is an isolated point of $\bigcup_{i\ge 0} \RR^i_1(A)$
if and only if the $\C$-vector space $\widehat{H}_{\hdot}(M^{\ab})$ is finite-dimensional. When
$M=S/\G$ is a solvmanifold, note that $H_{\hdot}(M^{\ab})= H_{\hdot}(\G')$, where $\G'$  is
the derived subgroup. 

\begin{lemma}
\label{lem:kres}
All connected covers of a solvmanifold $M=S/\G$ have finite dimensional homology. 
Moreover, if $H^i(M)\ne 0$, then $A$ has trivial resonance 
in degree $i$, for an arbitrary $\cdga$ model $A$ of $M$.
\end{lemma}

\begin{proof}
By the above discussion and \eqref{eq:germs12}, it is enough to show that the vector space $H_{\hdot}(G)$
is finite-dimensional, for any subgroup $G\subseteq \G$. This in turn will follow from results in
\cite[Chapters III-IV]{R}. We know that the solv-lattice $\G$ is a polycyclic group, by \cite[Proposition 3.7]{R}.
Hence, $G$ must be polycyclic \cite[Remark 4.2]{R}. According to Theorem 4.28 from \cite{R}, 
$G$ contains as a normal subgroup of finite index a solv-lattice $\G_0$, with associated (compact) solvmanifold $M_0$.
A transfer argument shows then that $H_{\hdot}(G)$ is a quotient of $H_{\hdot}(\G_0)= H_{\hdot}(M_0)$, 
and we are done.
\end{proof}

We may now describe germs at $1$ of non-abelian topological Green--Lazarsfeld loci for solvmanifolds, as follows.

\begin{theorem}
\label{thm:jumpsol}
Let $M=S/\G$ be a solvmanifold and let $\iota : \GG \to \GL (V)$ be a rational representation of a complex,
semisimple linear algebraic group of rank $1$, with tangent map $\theta\colon \sl_2 \to \gl(V)$. Then 
the germ at $1$ of $\Hom (\G, \GG)$ is isomorphic to the germ at $0$ of the cone on $\PP(H^1(M))\times \PP(\sl_2)$,
and the embedded topological Green--Lazarsfeld germs are given by
\begin{equation*}
\VV^i_1(M, \iota)_{(1)} =
\begin{cases}
\emptyset & \text{if $H^i (M)=0$,}\\
\cone (\PP(H^1(M))\times V(\det \circ \theta))_{(0)}
& \text{otherwise.}
\end{cases}
\end{equation*}
\end{theorem}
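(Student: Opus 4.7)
The plan is to assemble the pieces already in place: invoke the Kasuya model together with the transfer principle of Theorem B of \cite{DP-ccm}, and then feed the verified hypotheses (Lemmas \ref{lem:kflat} and \ref{lem:kres}) into Corollary \ref{cor:pisharp}.

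First, let $A$ be the Kasuya $\cdga$ model of the solvmanifold $M = S/\G$, and let $\theta \colon \sl_2 \to \gl(V)$ be the tangent map of $\iota$ at $1$. Since $\GG$ is a rank $1$ complex semisimple linear algebraic group, its Lie algebra is isomorphic to $\sl_2$, so Theorem B of \cite{DP-ccm} gives an analytic isomorphism $\Hom(\G, \GG)_{(1)} \simeq \F(A, \sl_2)_{(0)}$ identifying each embedded germ $\VV^i_1(M, \iota)_{(1)}$ with $\RR^i_1(A, \theta)_{(0)}$. This reduces the statement to computing $\F(A, \sl_2)_{(0)}$ and $\RR^i_1(A, \theta)_{(0)}$ from the $\cdga$ side.

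Next I would compute the representation variety germ. By Lemma \ref{lem:kflat}, $\F(A, \sl_2) = \F^1(A, \sl_2)$, so in particular $\F(A, \sl_2)_{(0)} = \F^1(A, \sl_2)_{(0)}$. By definition, $\F^1(A, \sl_2)$ is the image of the quadratic map $P \colon H^1(A) \times \sl_2 \to A^1 \otimes \sl_2$ sending $(\eta, g) \mapsto \eta \otimes g$, and the discussion following \eqref{eq:pcoord} identifies this image with $\cone\bigl(\PP(H^1(A)) \times \PP(\sl_2)\bigr)$. Since $A$ models $M$, we have $H^1(A) \cong H^1(M)$, yielding the first assertion of the theorem.

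Finally I would handle the resonance germs case by case. If $H^i(M) = 0$, then by the equivalences in \eqref{eq:germs12} the germ $\RR^i_1(A, \theta)_{(0)}$ is empty, whence $\VV^i_1(M, \iota)_{(1)} = \emptyset$. If $H^i(M) \ne 0$, then Lemma \ref{lem:kres} tells us that $A$ has trivial resonance in degree $i$, and Lemma \ref{lem:kflat} provides the second hypothesis of Corollary \ref{cor:pisharp}; applying that corollary gives
\[
\RR^i_1(A, \theta)_{(0)} = \Pi(A, \theta)_{(0)} = P\bigl(H^1(A) \times V(\det \circ \theta)\bigr)_{(0)}.
\]
Since $\det \circ \theta$ is a homogeneous polynomial on $\sl_2$, its zero set is a cone, and the same parametrization argument as above identifies $\Pi(A, \theta)$ with $\cone\bigl(\PP(H^1(M)) \times V(\det \circ \theta)\bigr)$. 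Transporting through the isomorphism of Theorem B of \cite{DP-ccm} produces the stated description of $\VV^i_1(M, \iota)_{(1)}$.

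The proof is thus essentially an assembly, since the two nontrivial ingredients, the global flatness identity $\F(A, \sl_2) = \F^1(A, \sl_2)$ for the Kasuya model and the trivial-resonance property in degrees where $H^i(M) \ne 0$, have already been established in Lemmas \ref{lem:kflat} and \ref{lem:kres}. The only mild care needed is in confirming that the parametrization $P$ identifies $\F^1$ and $\Pi$ with the claimed cones on products of projective spaces, and that the model condition supplies the identification $H^1(A) \cong H^1(M)$.
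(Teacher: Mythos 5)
Your proposal is correct and follows essentially the same route as the paper's proof: replace $\Hom(\G,\GG)_{(1)}$ and $\VV^i_1(M,\iota)_{(1)}$ by $\F(A,\sl_2)_{(0)}$ and $\RR^i_1(A,\theta)_{(0)}$ via Theorem B of \cite{DP-ccm} for the Kasuya model $A$, then combine Lemma \ref{lem:kflat}, the cone description of $\F^1$, equivalence \eqref{eq:germs12}, Lemma \ref{lem:kres}, and Corollary \ref{cor:pisharp}. The only additions beyond the paper's argument are explicit remarks (the Lie algebra of a rank $1$ semisimple group is $\sl_2$; $V(\det\circ\theta)$ is a cone) that are harmless and correct.
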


\begin{proof}
Let $A$ be a Kasuya model for $M$. Theorem B from \cite{DP-ccm} allows us to replace $\Hom (\G, \GG)_{(1)}$
by $\F(A,\sl_2)_{(0)}$ and $\VV^i_1(M, \iota)_{(1)}$ by $\RR^i_1(A, \theta)_{(0)}$. The assertion about 
$\Hom (\G, \GG)_{(1)}$ follows then from Lemma \ref{lem:kflat} and the description \eqref{eq:pcoord}
of $\F^1(A,\sl_2)$, since $H^{\hdot}(A) \simeq H^{\hdot}(M)$. If $H^{i}(A) =0$,  $\RR^i_1(A, \theta)_{(0)}=\emptyset$,
by \eqref{eq:germs12}. When $H^{i}(A) \ne 0$, Lemma \ref{lem:kres} and Corollary \ref{cor:pisharp} 
together imply that $\RR^i_1(A, \theta)_{(0)}= \Pi(A,\theta)_{(0)}$. The identification of $\Pi(A,\theta)$
with the cone on $\PP(H^1(M))\times V(\det \circ \theta)$ follows as before.
\end{proof}

Note that the complete description of embedded non-abelian jump loci from Theorem \ref{thm:jumpsol}
depends only on $\theta$ and the untwisted Betti numbers of $M$, and extends the similar result on nilmanifolds 
obtained in \cite{MPPS}.

\begin{example}
\label{ex:gap}
When the Lie group $S$ is nilpotent, a remarkable result of Dixmier \cite{Dix} says that 
the untwisted Betti numbers of $M$ have no gaps, i.e., $b_i(M)\ne 0$ for all $i\le \dim M$. 
This no longer holds for solvmanifolds. Indeed, Kasuya constructed in \cite[Example 10.4]{K1},
for every $s\ge 1$, a solvmanifold $M$ of dimension $2s+2$ with the property that $b_{s+1}(M)= 0$. 
\end{example}

We close this section by examining the rank one jump loci of virtually polycyclic groups. Let $M$ be a connected
CW-complex with finite $1$-skeleton and (finitely generated) fundamental group $\pi$. Denote by
$\TT (M):= \Hom (\pi, \C^{\times})= \Hom (\pi_{\ab}, \C^{\times})$ the affine {\em character torus},
corresponding to the case $\iota=\id_{\C^{\times}}$. For $\rho\in \TT (M)$, let $\C_{\rho}$ be
the associated rank $1$ local system on $M$, identified with the right $\pi$-module $\C_{\rho}$,
where $z\cdot g= z \rho (g)$, for $z\in \C$ and $g\in \pi$. As noted in \cite{MPPS}, the rank $1$
jump loci (the characteristic varieties $\VV^i_r(M)$) are given by 
\begin{equation}
\label{eq:defchar}
\VV^i_r(M)=\{\rho \in \TT (M) \mid
\dim H_i(M, \C_{\rho})\ge r\} \, .
\end{equation}

When $M$ is aspherical, we will replace it by $\pi$ in the notation. 
Our goal is to prove the following.

\begin{theorem}
\label{thm:charpoly}
If $\pi$ is a virtually polycyclic group, then the characteristic variety $\VV^i_1(\pi)$ is finite, for all $i$.
\end{theorem}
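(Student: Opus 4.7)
The plan is to argue by induction on the Hirsch length $h(\pi)$, combining a transfer argument with the Lyndon--Hochschild--Serre spectral sequence.

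The base case $h(\pi)=0$ is immediate since then $\pi$ is finite and $\TT(\pi)$ is itself a finite set. For the inductive step, I first pass to a torsion-free polycyclic normal subgroup $\pi_0\trianglelefteq \pi$ of finite index, which always exists in a virtually polycyclic group. Since $\pi/\pi_0$ is finite, the Hochschild--Serre spectral sequence over $\C$ collapses, yielding $H^q(\pi,\C_\rho)\simeq H^q(\pi_0,\C_{\rho|_{\pi_0}})^{\pi/\pi_0}$; moreover, the restriction map $\TT(\pi)\to\TT(\pi_0)$ has finite fibers, with kernel $\Hom(\pi/\pi_0,\C^\times)$. Hence it suffices to prove finiteness of $\VV^i_1(\pi_0)$. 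Being a nontrivial torsion-free polycyclic group, $\pi_0$ is solvable with abelianization of positive rank, so it admits a surjection $\pi_0\surj\Z$; let $\pi''\trianglelefteq \pi_0$ be its kernel. Then $\pi''$ is polycyclic with $h(\pi'')=h(\pi)-1$, so by the inductive hypothesis $\VV^q_1(\pi'')$ is finite for every $q$.

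Next, for the extension $1\to\pi''\to\pi_0\to\Z\to 1$ and the rank-one local system $\C_\rho$, I would invoke the Lyndon--Hochschild--Serre spectral sequence
\[
E_2^{p,q}=H^p\bigl(\Z,\, H^q(\pi'',\C_{\rho|_{\pi''}})\bigr)\Longrightarrow H^{p+q}(\pi_0,\C_\rho).
\]
Since $\Z$ has cohomological dimension one, only $p=0,1$ contribute and the sequence degenerates at $E_2$. Fixing a lift $\tilde t\in\pi_0$ of a generator of $\Z$, the $\Z$-action on $V_q:=H^q(\pi'',\C_{\rho|_{\pi''}})$ is by $\rho(\tilde t)\,\phi^*$, where $\phi^*$ is the outer-conjugation action on cohomology. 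If $\rho\in\VV^i_1(\pi_0)$, then some $E_2^{p,i-p}$ is nonzero, which forces (i) $\rho|_{\pi''}\in\VV^{i-p}_1(\pi'')$, a finite set by induction, and (ii) the operator $\rho(\tilde t)\,\phi^*$ acting on the finite-dimensional space $V_{i-p}$ has $1$ as an eigenvalue. For each admissible $\rho|_{\pi''}$, condition (ii) confines $\rho(\tilde t)\in\C^\times$ to the finite set of inverse eigenvalues of $\phi^*$ on $V_{i-p}$. Since a character $\rho\in\TT(\pi_0)$ is determined up to finite ambiguity by the pair $(\rho|_{\pi''},\rho(\tilde t))$, this makes $\VV^i_1(\pi_0)$ finite and completes the induction.

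The main technical point is the correct identification of the $\Z$-action as $\rho(\tilde t)\,\phi^*$ and the fact that each $V_q$ is finite-dimensional; the latter follows from the type $FP$ property of torsion-free polycyclic groups over $\C$. No Kasuya model is needed for this argument, so it applies uniformly to all virtually polycyclic groups, regardless of whether they admit a solvmanifold as classifying space.
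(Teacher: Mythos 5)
Your proposal is correct and follows essentially the same route as the paper: reduce to a poly-$\Z$ subgroup of finite index (the paper via a transfer argument, you via averaging over the finite quotient), then induct on an extension by $\Z$ using the Hochschild--Serre spectral sequence and the observation that the monodromy action on $H^\ast(\pi'',\C_{\chi})$ is the scalar $\rho(\tilde t)$ twisted by the automorphism-induced map, so that nonvanishing pins $\rho(\tilde t)$ to finitely many eigenvalues. The only cosmetic differences are your use of cohomology and Hirsch length in place of the paper's homology and composition-series length, and your appeal to the type $FP_\infty$ property where the paper carries finite-dimensionality along as part of the induction (its Lemma~\ref{lem:fin}).
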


This extends a result from \cite{MP}, where it was shown that $\bigcup_{i\ge 0} \VV^i_1(\pi)= \{ 1\}$, 
when $\pi$ is a finitely generated nilpotent group. At the same time, this gives a more precise version of 
Lemma \ref{lem:kres}, when $M=S/\G$ is a solvmanifold (aspherical, with polycyclic fundamental group $\G$)
with $\cdga$ model $A$, since  $\VV^i_r(M)_{(1)} \simeq \RR^i_r(A)_{(0)}$, by the main result of \cite{DP-ccm}.
Theorem \ref{thm:charpoly} was proved for solv-lattices by Kasuya in \cite{K2}, using a different method.

We begin by pointing out a useful virtual property, for characteristic varieties of groups.

\begin{lemma}
\label{lem:virt}
Let $f\colon K\inj G$ be the inclusion of a finite index subgroup $K$ in a finitely generated group $G$. 
Then $f^*\colon \TT (G) \to \TT (K)$ has finite fibers and sends $\VV^i_r(G)$ into $\VV^i_r(K)$,
for all $i,r$. In particular, if $\VV^i_1(K)$ is finite for all $i$, then $G$ has the same property.
\end{lemma}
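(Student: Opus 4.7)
The plan is to establish the two properties of $f^*$ separately and then combine them.

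For the first claim (finite fibers), I would first note that $f^*$ is a morphism of abelian algebraic groups, since $\TT(G)=\Hom(G_{\ab},\C^{\times})$ and $\TT(K)=\Hom(K_{\ab},\C^{\times})$, with $f^*$ induced by the abelianized inclusion $f_{\ab}\colon K_{\ab}\to G_{\ab}$. It therefore suffices to show that $\ker f^*$ is finite, since then every fiber, being a coset of the kernel, will be finite as well. A character $\rho\in \ker f^*$ is trivial on $K$, and since $\ker \rho$ is normal in $G$, it must contain the normal closure $N:=\langle K\rangle^G$. As $K$ has finite index in $G$, so does $N$, hence $G/N$ is a finite group, and $\ker f^*\cong \Hom(G/N,\C^{\times})$ is finite.

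For the inclusion $f^*(\VV^i_r(G))\subseteq \VV^i_r(K)$, I would invoke the transfer in group homology. For any $G$-module $V$ over $\C$, the standard construction gives a transfer $\mathrm{tr}\colon H_*(G,V)\to H_*(K,V|_K)$ whose composition with the map $\mathrm{res}\colon H_*(K,V|_K)\to H_*(G,V)$ induced by inclusion equals multiplication by $[G:K]$ on $H_*(G,V)$. Since $[G:K]$ is a nonzero integer, it is invertible in $\C$, so $\mathrm{tr}$ is injective, giving $\dim H_i(G,V)\le \dim H_i(K,V|_K)$. Applying this to $V=\C_\rho$ (so that $V|_K=\C_{f^*\rho}$) for a character $\rho\in \TT(G)$, we obtain $\rho\in\VV^i_r(G)\Rightarrow f^*\rho\in \VV^i_r(K)$. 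Equivalently, one can argue via Shapiro's lemma together with the fact that, in characteristic zero, $\C_\rho$ is a direct summand of $\mathrm{Ind}_K^G\C_{f^*\rho}$ as $G$-modules.

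The final assertion is then immediate: if $\VV^i_1(K)$ is finite, then $\VV^i_1(G)\subseteq (f^*)^{-1}(\VV^i_1(K))$ is a finite union of fibers of $f^*$, each of which is finite by the first step, so $\VV^i_1(G)$ is finite as well. The only real subtlety is to use the transfer over $\C$ in the homological direction (rather than the more familiar cohomological one); apart from this, the argument is formal.
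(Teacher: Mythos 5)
Your proposal is correct and follows essentially the same route as the paper: finiteness of $\ker f^*$ (the paper deduces it from the finite-index image of $f_{\ab}$ and exactness of the $\TT$-functor, you from the normal closure of $K$, which amounts to the same thing) combined with the homological transfer for the finite-index subgroup $K$, whose composite with the corestriction is multiplication by $[G:K]$ and hence forces $\dim H_i(G,\C_\rho)\le\dim H_i(K,\C_{f^*\rho})$ over $\C$. No gaps.
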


\begin{proof}
Clearly, $f_{\ab}: K_{\ab} \to G_{\ab}$ has finite index image. By the exactness of the $\TT$-functor, 
$f^*$ has finite kernel, which proves the first assertion. For $\rho \in \TT (G)$, a transfer argument
(see \cite[III.9]{B}) shows that $f_\hdot \colon H_\hdot (K, \C_{f^* \rho}) \to H_\hdot (G, \C_{\rho})$ 
is surjective, whence our second claim. The last property follows from the first two.
\end{proof}

If $\pi$ is virtually polycyclic, it contains a finite index subgroup $\pi_0$ which is polycyclic with infinite 
cyclic quotients in a composition series, by \cite[Lemma 4.6]{R}. Due to Lemma \ref{lem:virt}, we may
thus assume in Theorem \ref{thm:charpoly} that $\pi$ is actually poly-$\Z$, and argue by induction on the length
of a composition series. 

The induction step goes as follows. Let $\alpha$ be an automorphism of a finitely generated group $G$. 
Denote by $G_{\alpha}$ the semidirect product $G\rtimes_{\alpha} \Z$ and consider the exact sequence
\begin{equation}
\label{eq:ext}
1\to G \stackrel{j}{\longrightarrow} G_{\alpha} \stackrel{p}{\longrightarrow} \Z \to 1\, .
\end{equation}

We are going to use, for a $G_{\alpha}$-module $U$, the Hochschild--Serre spectral sequence of 
the group extension \eqref{eq:ext} (\cite[p.~171]{B}), 
\begin{equation}
\label{eq:hserre}
E^2_{st}= H_s(\Z, H_t(G,U)) \Longrightarrow H_{s+t}(G_{\alpha},U)\, ,
\end{equation}
which collapses at the $E^2$-term. We denote by $\tau= \sigma(1)\in G_{\alpha}$ 
the lift of $1\in \Z$ to $G_{\alpha}$, via the canonical section $\sigma$ of $p$. 

We start by using trivial coefficients $\Z$, to describe character tori. We find that
$(G_{\alpha})_{\ab}\simeq C \oplus \Z$, where $C$ denotes the coinvariants 
of $\alpha_{\ab}$ acting on $G_{\ab}$. Thus, $\rho \in \TT (G_{\alpha})$
is identified with $(\chi, \lambda)\in \TT (C)\times \C^{\times}$, $\chi$ 
is identified with $j^*\rho \in \TT(G)$, and $\lambda=\rho (\tau)$. 

For $U=\C_{\rho}$, the spectral sequence \eqref{eq:hserre} collapses to the isomorphism
\begin{equation}
\label{eq:bettirho}
H_{i}(G_{\alpha}, \C_{\rho})= H_0(\Z, H_{i}(G, \C_{\chi})) \oplus  H_1(\Z, H_{i-1}(G, \C_{\chi}))\, .
\end{equation}

Moreover, the action of $1\in \Z$ on $H_{\hdot}(G, \C_{\chi})$ may be described as follows, cf.
\cite[p.~171 and p.~78-79]{B}. Note that $\alpha: G\to G$ is conjugation by $\tau$. Let
$(\alpha, \id): (G, \C_{\chi}) \to (G, \C_{\chi})$ be the automorphism in the category of coefficients 
associated to $\alpha$ and $\id_{\C}$. Then the right action of $1\in \Z$ on $H_{\hdot}(G, \C_{\chi})$
is given by $\lambda \cdot (\alpha, \id)^{-1}_{*}$. 

\begin{lemma}
\label{lem:fin}
If $\dim H_i(G,U)<\infty$ for all $i$ and every finite-dimensional $G$-module $U$, 
then the same property holds for $G_{\alpha}$.
\end{lemma}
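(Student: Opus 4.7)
The plan is to recycle the machinery that produced \eqref{eq:bettirho}, but with an arbitrary finite-dimensional $G_\alpha$-module $U$ in place of the rank one coefficient system $\C_\rho$. Concretely, fix such a $U$ and consider the Hochschild--Serre spectral sequence of the extension \eqref{eq:ext}:
\begin{equation*}
E^2_{st} = H_s(\Z, H_t(G, U)) \Longrightarrow H_{s+t}(G_\alpha, U).
\end{equation*}
Since $\Z$ has (co)homological dimension $1$, only the columns $s = 0$ and $s = 1$ contain non-zero terms. This forces the $E^2$-collapse already invoked in deriving \eqref{eq:bettirho}, and yields a short exact sequence
\begin{equation*}
0 \to H_0(\Z, H_i(G, U)) \to H_i(G_\alpha, U) \to H_1(\Z, H_{i-1}(G, U)) \to 0
\end{equation*}
for every $i \ge 0$.

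Next, I would note the elementary observation that the restriction $U|_G$ of a finite-dimensional $G_\alpha$-module is a finite-dimensional $G$-module. By the standing hypothesis on $G$, the groups $H_t(G, U)$ are therefore finite-dimensional over $\C$ for all $t$. Moreover, for any $\Z$-module $M$, the groups $H_0(\Z, M)$ and $H_1(\Z, M)$ are the coinvariants and the invariants of $M$ under the action of the generator $1 \in \Z$; in particular, they are subquotients of $M$ as $\C$-vector spaces, and hence finite-dimensional whenever $M$ is.

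Combining these two facts in the short exact sequence above, both terms flanking $H_i(G_\alpha, U)$ are finite-dimensional, so $\dim H_i(G_\alpha, U) < \infty$, which is the desired conclusion. No serious obstacle is anticipated: the collapse of the spectral sequence has been used already in the derivation of \eqref{eq:bettirho}, and the only new ingredient is the trivial observation that restriction preserves finite-dimensionality of the coefficient module. The induction in the proof of Theorem \ref{thm:charpoly} will then consume this lemma together with Lemma \ref{lem:virt} to handle poly-$\Z$ groups, one composition factor at a time.
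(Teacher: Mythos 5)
Your proof is correct and follows exactly the route the paper intends: the paper's own proof is the one-line remark that the statement is a straightforward consequence of the Hochschild--Serre spectral sequence \eqref{eq:hserre}, and your write-up simply supplies the details (restriction preserves finite-dimensionality, and $H_0(\Z,-)$, $H_1(\Z,-)$ are the coinvariants and invariants, hence subquotients of a finite-dimensional space).
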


\begin{proof}
This is a straightforward consequence of the spectral sequence \eqref{eq:hserre}. 
\end{proof}

\begin{lemma}
\label{lem:ind}
Assume that $G$ has the finiteness property from Lemma \ref{lem:fin}. 
In the above setting, $\rho\in \VV^i_1(G_{\alpha})$ if and only if either
$j^*\rho =\chi \in \VV^i_1(G)$ and  $\lambda$ is an eigenvalue of 
$(\alpha, \id)_{*i} : H_{i}(G, \C_{\chi}) \to H_{i}(G, \C_{\chi})$ or
the same conditions hold for $i-1$. 
\end{lemma}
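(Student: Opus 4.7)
The plan is to analyze $H_i(G_{\alpha}, \C_{\rho})$ via the collapsed spectral sequence \eqref{eq:bettirho} and reduce the question to the eigenvalues of the induced action of $\Z$ on twisted homology.

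First, by the finiteness hypothesis on $G$, both $W_i := H_i(G, \C_{\chi})$ and $W_{i-1} := H_{i-1}(G, \C_{\chi})$ are finite-dimensional $\C$-vector spaces, on which $1\in \Z$ acts as the operator $T_k := \lambda \cdot (\alpha, \id)_{*k}^{-1}$, $k=i, i-1$, described in the excerpt. For any finite-dimensional $\C$-vector space $W$ with an action of $\Z$ by an operator $T$, the standard resolution of $\C$ over $\C[\Z]$ gives
\begin{equation*}
H_0(\Z, W) = \coker(T-\id) \quad \text{and} \quad H_1(\Z, W) = \ker(T-\id).
\end{equation*}
Since $W$ is finite-dimensional, $T-\id$ is surjective iff it is injective iff $1$ is not an eigenvalue of $T$. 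Hence $H_0(\Z, W)$ and $H_1(\Z, W)$ are simultaneously non-zero, and both vanish exactly when $1$ is not an eigenvalue of $T$ (which is automatic when $W=0$).

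Next, I translate the eigenvalue condition for $T_k$: since $T_k = \lambda \cdot (\alpha, \id)_{*k}^{-1}$, the operator $T_k$ has $1$ as an eigenvalue if and only if $(\alpha, \id)_{*k}^{-1}$ has $\lambda^{-1}$ as an eigenvalue, equivalently $(\alpha, \id)_{*k}$ has $\lambda$ as an eigenvalue. Note this forces $W_k \neq 0$, i.e.\ $\chi = j^*\rho \in \VV^k_1(G)$; conversely, if $W_k = 0$ the eigenvalue condition fails vacuously.

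Finally, combining the previous two steps with the decomposition \eqref{eq:bettirho}, I conclude that $H_i(G_{\alpha}, \C_{\rho}) \neq 0$ if and only if at least one of the two summands $H_0(\Z, W_i)$ or $H_1(\Z, W_{i-1})$ is non-zero, which by the above happens exactly when $(j^*\rho \in \VV^i_1(G)$ and $\lambda$ is an eigenvalue of $(\alpha,\id)_{*i}$) or $(j^*\rho \in \VV^{i-1}_1(G)$ and $\lambda$ is an eigenvalue of $(\alpha,\id)_{*(i-1)})$. No step is a serious obstacle here: the only subtlety is checking that the vanishing of $H_0$ and $H_1$ under a $\Z$-action on a finite-dimensional space are governed by the same eigenvalue condition, which is why the finiteness hypothesis of Lemma \ref{lem:fin} is essential.
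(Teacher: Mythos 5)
Your proof is correct and follows essentially the same route as the paper: the paper also reduces to the decomposition \eqref{eq:bettirho} and observes that non-vanishing of $H_0(\Z,-)$ and $H_1(\Z,-)$ on a finite-dimensional module is governed by whether $1$ is an eigenvalue of the monodromy $\lambda\cdot(\alpha,\id)_*^{-1}$, i.e.\ whether $\lambda$ is an eigenvalue of $(\alpha,\id)_*$. You have merely written out the $\coker/\ker$ identification and the rank--nullity step that the paper leaves implicit.
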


\begin{proof}
By definition \eqref{eq:defchar}, $\rho\in \VV^i_1(G_{\alpha})$ if and only if
$H_{i}(G_{\alpha}, \C_{\rho}) \ne 0$. We infer from \eqref{eq:bettirho} that 
$H_{i}(G_{\alpha}, \C_{\rho}) \ne 0$ if and only if $1$ is an eigenvalue for 
the monodromy action of $1\in \Z$ on either $H_{i}(G, \C_{\chi})$ or
$H_{i-1}(G, \C_{\chi})$. Our claim follows.
\end{proof}

\bigskip

\noindent
{\bf Proof of Theorem \ref{thm:charpoly}.} As noticed before, we may assume that $\pi$ is poly-$\Z$.
We suppose inductively that the group $G$ from \eqref{eq:ext} has the finiteness property
from Lemma \ref{lem:fin}, and Theorem \ref{thm:charpoly} holds for $G$. We will finish the induction
by showing that both properties are inherited by $G_{\alpha}$. Lemma \ref{lem:fin} takes care of
the first property. We may use then Lemma \ref{lem:ind} to deduce that, if $\rho\in \VV^i_1(G_{\alpha})$,
then $\chi=j^*\rho$ belongs to the finite set $\VV^i_1(G)\cup \VV^{i-1}_1(G)$, and the second component of
$\rho$, $\lambda$, is among the finitely many eigenvalues of $(\alpha, \id)_{*q}$ acting on $H_{q}(G, \C_{\chi})$,
for $q=i$ or $i-1$. This completes our proof. \hfill $\Box$

\section{Lie algebras}
\label{sec:lie}

The tools developed in Section \ref{sec:gral} also enable us to describe completely the germs at $0$ of both
$\sl_2$-valued flat connections and their depth $1$ resonance subvarieties, for cochain $\cdga$'s of 
finite-dimensional Lie algebras. 

By a result of Hattori \cite{Hat}, the $\cdga$ $A^\hdot =\CC^{\hdot}(\ss\otimes \C)$ is a model for
the solvmanifold $S/\G$, when the Lie algebra $\ss$ of $S$ is completely solvable. Due to Lemmas
\ref{lem:kflat} and \ref{lem:kres}, the comparison theorem with the corresponding topological germs 
at $1$ from \cite{DP-ccm} implies that $\F(A,\sl_2)_{(0)} = \F^1 (A,\sl_2)_{(0)}$ and $A$ has trivial 
rank one resonance in degree $i$, if $H^i(A)\ne 0$. Therefore, Corollary \ref{cor:pisharp} may be used 
for the cochain $\cdga$ $A$ and $\g=\sl_2$.
In what follows, we aim at extending this approach to an arbitrary cochain $\cdga$ $A=\CC^\hdot \h$, 
assuming only that the Lie algebra $\h$ is finite-dimensional. 

\begin{remark}
\label{rem=mill}
Millionschikov proved in \cite{M} that $\bigcup_{t\ge 0} \RR^t_1 (\CC^\hdot \ss)$ is finite, for 
an arbitrary solvable Lie algebra $\ss$. By the above discussion, this is an infinitesimal analogue of Theorem 
\ref{thm:charpoly}, in the completely solvable case.
\end{remark}

We refer the reader to \cite[Ch. VII-VIII]{HS} for
basic facts related to Lie algebra (co)homology. 
Let $\h$ be a finite-dimensional Lie algebra. 
Note that $\CC^\hdot \h= (\bigwedge^{\hdot} \h^*, d)$ has the exterior algebra on the dual vector space $\h^*$
as underlying commutative graded algebra, hence $\CC^\hdot \h$ is connected and finite-dimensional. Denoting by
$H^\hdot(\h, U)$ the Lie cohomology of $\h$ with coefficients in the $\h$-module $U$, note also that $H^\hdot(\CC \h)$
is the Lie cohomology of $\h$ with trivial $\C$-coefficients, denoted simply by $H^\hdot(\h)$. Hence, 
$\F (\CC^\hdot \h)= H^1(\h)$. 

We will need a couple of results on jump loci of cochain $\cdga$'s from \cite{MPPS}. First, for any finite-dimensional 
Lie algebra $\g$, there is a natural isomorphism between the variety $\F (\CC^\hdot \h, \g)$ and the 
{\em Lie representation variety} $\rep (\h, \g)\subseteq \h^*\otimes \g= \Hom (\h, \g)$, consisting of all
Lie homomorphisms from $\h$ to $\g$. Moreover,  $\F^1 (\CC^\hdot \h, \g)$ is identified with
$\rep^1 (\h, \g):= \{ \varphi \in \rep (\h, \g) \mid \rank (\varphi) \le 1 \}$. For $\g=\C$ (the rank one case),
clearly $\rep (\h, \C)= \rep (\h_{\ab}, \C)= \Hom (\h/[\h,\h], \C)= \Hom (H_1(\h), \C)=H^1(\h)$, thus 
recovering the previous identification. For $\omega\in H^1(\h)$, we will denote by ${}_{\omega}\C$ the
corresponding rank $1$ $\h$-module. This leads to the following description of rank $1$ resonance:
\begin{equation}
\label{eq:reslie}
\RR^i_r(\CC^\hdot \h)= \{ \omega\in H^1(\h) \mid \dim H^i(\h, {}_{\omega}\C) \ge r \}\, .
\end{equation}

We may now establish the Lie analogue of Theorem \ref{thm:charpoly}, in full generality.

\begin{prop}
\label{prop:lieres}
If the Lie algebra $\h$ is finite-dimensional, then $\RR^i_1(\CC^\hdot \h)$ is finite, for all $i$.
In particular, the $\cdga$ $\CC^\hdot \h$ has trivial resonance in degree $i$, if $H^i(\h) \ne 0$.
\end{prop}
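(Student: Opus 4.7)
The plan is to reduce the problem to a vanishing statement for cohomology of an abelian Lie algebra with twisted coefficients, via the Hochschild--Serre spectral sequence attached to the ideal $\kk := [\h,\h] \inj \h$ with quotient $\h_{\ab}$. Any $\omega \in H^1(\h) = \Hom(\h_{\ab}, \C)$ vanishes on $\kk$, so ${}_\omega\C$ is a trivial $\kk$-module, and the induced $\h_{\ab}$-module structure on $H^q(\kk, {}_\omega\C)$ splits tensorially, giving
\begin{equation*}
E_2^{p,q} = H^p\bigl(\h_{\ab},\, H^q(\kk) \otimes {}_\omega\C\bigr) \Longrightarrow H^{p+q}(\h, {}_\omega\C).
\end{equation*}
By \eqref{eq:reslie}, it will then suffice to control those $\omega$ for which some $E_2^{p,q}$ with $p+q=i$ is nonzero.

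Next, I decompose the finite-dimensional $\h_{\ab}$-module $H^q(\kk)$ into generalized weight spaces $H^q(\kk) = \bigoplus_\chi V_\chi^{(q)}$, indexed by a finite set of weights $\chi \in H^1(\h)$. Each summand $V_\chi^{(q)} \otimes {}_\omega\C$ is a generalized weight module of weight $\chi + \omega$, and the crux of the argument is the vanishing
\begin{equation*}
H^p\bigl(\h_{\ab},\, V_\chi^{(q)} \otimes {}_\omega\C\bigr) = 0 \quad \text{whenever } \chi + \omega \ne 0.
\end{equation*}
Granting this, $E_2^{p,q} \ne 0$ forces $\omega = -\chi$ for some weight $\chi$ of $H^q(\kk)$, and the union of these constraints over $0 \le q \le i$ is a finite superset of $\RR^i_1(\CC^\hdot \h)$, giving the first claim. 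The in-particular assertion follows immediately: by \eqref{eq:germs12}, $H^i(\h) \ne 0$ places $0$ in the finite set $\RR^i_1(\CC^\hdot \h)$, hence as an isolated point, and any polynomial separating $0$ from the other finitely many points supplies a $\varphi_0$ as in Definition \ref{def:zerores}.

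The main obstacle is the vanishing claim, which must handle the potentially non-semisimple part of the $\h_{\ab}$-action, not only the scalar weight. My preferred route exploits the standard Cartan-type homotopy on the Koszul complex $\bigwedge^\hdot \h_{\ab}^* \otimes N$: for each $x \in \h_{\ab}$, the identity $L_x = [d, \iota_x]$ shows that the action of $x$ on $N$ induces the zero map on $H^\hdot(\h_{\ab}, N)$. On the other hand, if $(\chi + \omega)(x) \ne 0$ for some $x$, then $x$ acts on $V_\chi^{(q)} \otimes {}_\omega\C$ as a nonzero scalar plus a commuting nilpotent, hence invertibly; the induced endomorphism on cohomology is thus simultaneously zero and invertible, forcing the cohomology to vanish.
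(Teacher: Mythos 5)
Your proof is correct, and it takes a genuinely different route from the paper's. The paper invokes a Levi decomposition $\h=\ss\rtimes\g$, uses the five-term exact sequence (plus Whitehead vanishing) to see that $j^*\colon H^1(\h)\to H^1(\ss)$ is injective, runs the Grothendieck--Hochschild--Serre spectral sequence of $0\to\ss\to\h\to\g\to 0$, and then quotes Millionschikov's theorem that $\bigcup_{t}\RR^t_1(\CC^\hdot\ss)$ is finite for solvable $\ss$. You instead use the extension $0\to[\h,\h]\to\h\to\h_{\ab}\to 0$, which has the advantage that every $\omega$ restricts trivially to the ideal, and you prove the needed vanishing by hand: the weights of the finite-dimensional $\h_{\ab}$-module $H^q([\h,\h])$ form a finite set of linear functionals (hence elements of $H^1(\h)$), and on a generalized weight module of nonzero weight the Cartan homotopy $L_x=[d,\iota_x]$ on the Koszul complex of the abelian quotient makes the cohomology simultaneously killed by and acted on invertibly by $L_x=\rho(x)$ (the coadjoint part of $L_x$ being trivial since $\h_{\ab}$ is abelian), forcing it to vanish. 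All steps check out, including the passage from $E_2$-vanishing to $H^{i}(\h,{}_{\omega}\C)=0$ and the deduction of the ``in particular'' clause from finiteness together with \eqref{eq:germs12}. What your argument buys is self-containedness and an explicit bound, $\RR^i_1(\CC^\hdot\h)\subseteq\{-\chi\}$ over the weights $\chi$ of $H^q([\h,\h])$ with $q\le i$; in effect you reprove the relevant case of Millionschikov's finiteness result for arbitrary finite-dimensional $\h$ rather than citing it for the solvable radical. The paper's proof is shorter given the cited results, and its reduction to the solvable case via $j^*$ is reused elsewhere in Section 4, which is presumably why the authors organized it that way.
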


\begin{proof}
By Levi's theorem \cite[p.~250]{HS}, $\h=\ss \rtimes \g$, with $\ss$ solvable and $\g$ semisimple.
Consider the Lie extension $0\to \ss \stackrel{j}{\longrightarrow} \h \longrightarrow \g \to 0$, and 
the associated natural exact sequence  \cite[p.~238]{HS} 
\begin{equation}
\label{eq:5term}
H_2(\h)\to H_2(\g)\to  \ss/[\h, \ss] \to H_1(\h)\to H_1(\g)\to 0\, ,
\end{equation}
where $H_1(\g)$ and $H_2(\g)$ vanish by semisimplicity, cf. \cite[p.~247 and p.~249]{HS}. We infer that
the map induced on abelianizations, $j_{\ab}: H_1(\ss) \to H_1(\h)$, may be identified with the surjection  
$\ss/[\ss, \ss] \surj \ss/[\h, \ss]$; in particular, $j^*: H^1(\h) \inj H^1(\ss)$ is injective. 

For $\omega\in H^1(\h)$, the extension gives rise to the Grothendieck--Hochschild--Serre spectral sequence
\cite[p.~305]{HS}
\begin{equation}
\label{eq:grspsq}
H^s(\g, H^t(\ss, {}_{j^* \omega}\C)) \Longrightarrow H^{s+t}(\h, {}_{\omega}\C) \, .
\end{equation}
If $j^* \omega \not\in \bigcup_{t\ge 0} \RR^t_1 (\CC^\hdot \ss)$, \eqref{eq:grspsq} 
implies by \eqref{eq:reslie} that $H^{\hdot}(\h, {}_{\omega}\C)= 0$. On the other hand,
the solvability of $\ss$ forces $\bigcup_{t\ge 0} \RR^t_1 (\CC^\hdot \ss)$ to be finite, as shown
by Millionschikov in \cite{M}. Our claim follows.
\end{proof}

We know from \cite{MPPS} that $\F(\CC^\hdot \h ,\sl_2) = \F^1 (\CC^\hdot \h ,\sl_2)$, when $\h$
is nilpotent. Now, we have to prove that $\F(\CC^\hdot \h ,\sl_2)_{(0)} = \F^1 (\CC^\hdot \h ,\sl_2)_{(0)}$, in general.
We will use the standard basis of $\sl_2$, $\{ H, X_+, X_- \}$ (where $H$ is diagonal with $\eig (H)= \{ \pm 1\}$), for which
$[X_+, X_-]=H$ and $[H, X_{\epsilon}]= 2 \epsilon X_{\epsilon}$.

We start by analyzing a simple metabelian case: $\h=V\rtimes_{\alpha} \C$, where $V$ is abelian and
$\alpha \in \gl (V)$. It is easy to check that $\h$ is non-nilpotent if and only if $\alpha$ has a non-zero eigenvalue.
Choose a basis $\{ z_i\}$ of $V$ for which $\alpha$ is in Jordan normal form. For an eigenvalue $\lambda \in \eig (\alpha)$,
and an $r$-Jordan block of type $\lambda$, $V(\lambda)$, $\alpha z_1=\lambda z_1$ and 
$\alpha z_i=\lambda z_i + z_{i-1}$ for $1<i\le r$. We will use the basis $\{ z_i, u\}$ for $\h$, where
$u$ corresponds to $1\in \C$. 

For $\varphi \in \rep (\h, \sl_2)$, we know that, on $V$, $\varphi z_i= t_i Z$ with $t_i\in \C$, for some
$Z\in \sl_2$, since $V$ is abelian. We set $\varphi u=U\in \sl_2$. When $\h$ is not nilpotent, we denote by
$f$ the regular function on $\Hom (\h, \sl_2)$ defined by $f(\varphi)=\prod (\det U +\frac{\lambda^2}{4})$,
where the product is taken over the non-zero eigenvalues $\lambda \in \eig (\alpha)$. Clearly, $f(0)\ne 0$.

\begin{lemma}
\label{lem:metab}
For $\h= V \rtimes_{\alpha}\C$ as above, the following hold.
\begin{enumerate}
\item \label{m1}
Assuming that $\h$ is non-nilpotent, $\varphi \in \rep (\h, \sl_2)\setminus \rep^1 (\h, \sl_2)$
if and only if there is $0\ne \lambda \in \eig (\alpha)$ and $\epsilon \in \{ \pm 1\}$ such that
$\varphi$ has the following form (up to $\GL_2$-conjugation):
\begin{itemize}
\item
$\varphi u=\frac{\lambda}{2\epsilon} H$;
\item
$\varphi =0$, on each Jordan block $V(\lambda')$ with $\lambda'\ne \lambda$; 
\item
on each $r$-Jordan block $V(\lambda)$, $\varphi z_i= t_i X_{\epsilon}$ for $1\le i\le r$, 
with $t_i=0$ for $i<r$; 
\item
$t_r\ne 0$, for at least one such block $V(\lambda)$.
\end{itemize}
Furthermore, in this situation $f(\varphi)=0$.
\item \label{m2}
$\rep (\h, \sl_2)= \rep^1 (\h, \sl_2)$ if and only if $\h$ is nilpotent.
\end{enumerate}
\end{lemma}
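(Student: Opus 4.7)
I would reduce both parts to an explicit computation with the bracket equations, exploiting a normal form for linearly independent pairs $(U,Z)$ in $\sl_2$. First I parametrize a homomorphism $\varphi \colon \h \to \sl_2$ by $U := \varphi(u)$ and $\varphi(z_i) = t_i Z$, where the existence of such $Z \in \sl_2$ follows from the fact that $V$ is abelian and every abelian subalgebra of $\sl_2$ is one-dimensional ($\sl_2$ has rank one). Then $\varphi$ has rank $\ge 2$ precisely when $Z \ne 0$, some $t_i \ne 0$, and $U \notin \C Z$. Translating the bracket relations on an $r$-dimensional Jordan block $V(\lambda)$ produces
\begin{equation*}
t_i\,[U,Z] = (\lambda t_i + t_{i-1})\,Z, \quad 1 \le i \le r,
\end{equation*}
with the convention $t_0 := 0$.

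Now assume $\varphi \in \rep(\h,\sl_2)\setminus \rep^1(\h,\sl_2)$, so $Z \ne 0$, some $t_i \ne 0$, and $U, Z$ are linearly independent. The displayed system forces $[U,Z] = \mu Z$ for some $\mu \in \C$, i.e.\ $Z$ is an $\ad U$-eigenvector. A short case check in $\sl_2$ excludes $U$ nilpotent (after conjugation $U = X_+$, whose adjoint has $\C X_+$ as its only eigenline, forcing the contradiction $Z \in \C U$). Hence $U$ is semisimple; after a $\GL_2$-conjugation $U = cH$ with $c \ne 0$, and $Z$ lies in one of the $\pm 2c$-root spaces. Rescaling $Z$ and absorbing the scalar into the $t_i$'s, I may take $Z = X_\epsilon$ with $\mu = 2\epsilon c$.

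Plugging $[U,Z] = \mu Z$ back yields $t_i(\mu - \lambda) = t_{i-1}$ on each Jordan block $V(\lambda)$: if $\mu \ne \lambda$ the recurrence propagates $t_1 = \cdots = t_r = 0$, so $\varphi$ vanishes on that block; if $\mu = \lambda$ it forces $t_1 = \cdots = t_{r-1} = 0$ and leaves $t_r$ free. The required existence of some nonzero $t_r$ pins down $\lambda = \mu = 2\epsilon c \in \eig(\alpha)\setminus\{0\}$, giving $\varphi(u) = (\lambda/(2\epsilon))\, H$ and matching \eqref{m1}; the vanishing $f(\varphi) = 0$ is immediate from $\det\bigl((\lambda/(2\epsilon))H\bigr) = -\lambda^2/4$, which kills the factor indexed by $\lambda$. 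For \eqref{m2}, nilpotence of $\h = V \rtimes_\alpha \C$ is equivalent to $\eig(\alpha) = \{0\}$: by \eqref{m1} this exactly rules out rank-$2$ homomorphisms, while any nonzero eigenvalue of $\alpha$ produces an explicit one via the recipe above. I expect the main obstacle to be the $\sl_2$ normal-form step classifying pairs $(U, Z)$ with $[U,Z] \in \C Z$ and $U, Z$ linearly independent; once past that, the rest is a short Jordan-block recurrence.
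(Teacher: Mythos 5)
Your argument is correct and follows essentially the same route as the paper: reduce to the pair $(U,Z)$ with $\varphi z_i=t_iZ$, extract the Jordan-block recurrence, deduce $[U,Z]=\lambda Z$ from the first nonzero $t_i$, and normalize $U=\frac{\lambda}{2\epsilon}H$, $Z=X_\epsilon$ by excluding the nilpotent case. The only cosmetic differences are the order of the normal-form versus recurrence steps and that you re-derive the nilpotent direction of part \eqref{m2} directly, where the paper cites \cite{MPPS}.
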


\begin{proof}
Part \eqref{m1}. The rank condition on $\varphi$ translates to the fact that $\rank \{ U,Z\}=2$ plus
the property that the vector $\underline{t}= (t_1, \dots, t_r)$ is non-zero for at least one Jordan block.
Clearly, $\varphi \in \rep (\h, \sl_2)$ if and only if, for every $r$-Jordan block $V(\lambda)$, 
$\varphi \alpha z_i= \varphi [u,z_i]= t_i [U,Z]$, for $1\le i\le r$. In more detail, this means that
\begin{equation}
\label{eq:rlambda}
\lambda t_1 Z= t_1 [U,Z] \quad \text{and} \quad (\lambda t_i + t_{i-1} )Z= t_i [U,Z]\, , 
\quad \text{for} \quad 1<i\le r\, .
\end{equation}

When $\underline{t}\ne 0$, \eqref{eq:rlambda} may be solved as follows. Let $i$ be 
the first value for which $t_i\ne 0$. We deduce from \eqref{eq:rlambda} that $[U,Z] =\lambda Z$.
Since $Z\ne 0$, it follows that \eqref{eq:rlambda} is equivalent to $t_i=0$ for $1\le i<r$. 

We are left with solving the equation $[U,Z] =\lambda Z$, which implies that $\lambda\ne 0$,
since otherwise our assumption $\rank \{ U,Z\}=2$ would be violated. This in turn forces $\det U\ne 0$.
Indeed, otherwise $U$ would be nilpotent, hence $\ad$-nilpotent, which implies $\lambda=0$. Thus,
we may assume that $U=t H$ with $t\in \C^{\times}$, modulo $\GL_2$-conjugation. This implies that we may
also assume $Z=X_{\epsilon}$ for some $\epsilon \in \{ \pm 1\}$ and $t=\frac{\lambda}{2\epsilon}$.
Plainly, $[t H, X_{\epsilon}]= \lambda X_{\epsilon}$.

This completes our explicit description of $\rep (\h, \sl_2)\setminus \rep^1 (\h, \sl_2)$.
The property $f(\varphi)=0$ becomes a direct consequence of the construction of $f$, since $\det H=-1$.

Part \eqref{m2}. If $\h$ is nilpotent, the claim is proved in \cite{MPPS}.  If $\h$ is non-nilpotent, 
then $\rep (\h, \sl_2)\ne \rep^1 (\h, \sl_2)$, by Part \eqref{m1}.
\end{proof}

At the next step, we settle the solvable case.

\begin{lemma}
\label{lem:solvsl2}
Let $\ss$ be a finite-dimensional solvable Lie algebra. There is a Zariski closed subset
$W\subseteq \Hom (\ss, \sl_2)$ not containing $0$ such that $\rep (\ss, \sl_2)\subseteq \rep^1 (\ss, \sl_2) \cup W$.
In particular, $\rep (\ss, \sl_2)_{(0)} = \rep^1 (\ss, \sl_2)_{(0)}$.
\end{lemma}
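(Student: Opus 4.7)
The plan is induction on $\dim \ss$. The base $\dim \ss \leq 1$ is immediate: $\ss$ is abelian, so every homomorphism to $\sl_2$ has rank at most one, and $W = \emptyset$ works.

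For the inductive step, pick a codimension-one ideal $\ss' \subset \ss$ (the preimage in $\ss$ of any hyperplane of the nonzero abelian quotient $\ss/[\ss,\ss]$), fix a vector $u$ with $\ss = \ss' \oplus \C u$, and let $\alpha := \ad(u)|_{\ss'}$ be the induced derivation of $\ss'$. The inductive hypothesis applied to the solvable $\ss'$ supplies a polynomial $f_{\ss'}$ on $\Hom(\ss',\sl_2)$ with $f_{\ss'}(0) \ne 0$ that vanishes on $\rep(\ss',\sl_2) \setminus \rep^1(\ss',\sl_2)$. I will then form two polynomials on $\Hom(\ss,\sl_2)$: $F_1(\varphi) := f_{\ss'}(\varphi|_{\ss'})$ and $F_2(\varphi) := \prod_{c \in \eig(\alpha) \setminus \{0\}} (\det \varphi(u) + c^2/4)$, with the convention that $F_2 \equiv 1$ when $\alpha$ has only the zero eigenvalue. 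Clearly $F_1(0), F_2(0) \ne 0$. I take $W := V(F_1 F_2)$; then $W$ is Zariski closed in $\Hom(\ss,\sl_2)$, misses $0$, and the \textquotedblleft in particular\textquotedblright\ statement follows at once.

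The heart of the argument is to show that $F_1 F_2$ vanishes on every $\varphi \in \rep(\ss,\sl_2)$ with $\rank \varphi \ge 2$. If $\varphi|_{\ss'}$ has rank $\ge 2$, then $F_1(\varphi) = 0$ by the inductive hypothesis. Otherwise $\varphi|_{\ss'}$ has rank $\le 1$; rank zero is ruled out (else $\varphi$ factors through $\ss/\ss' \cong \C$, contradicting $\rank \varphi = 2$), so $\varphi|_{\ss'}$ has rank exactly one. In this rank-drop case, the image $\varphi(\ss)$ is a two-dimensional solvable subalgebra of $\sl_2$, hence $\GL_2$-conjugate to the Borel $\mathfrak{b} = \spn\{H, X_+\}$; since $\ss'$ is an ideal of $\ss$, $\varphi(\ss')$ is an ideal of $\varphi(\ss) \cong \mathfrak{b}$ and therefore coincides with the unique one-dimensional ideal $\C X_+$, the nilradical. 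Thus $\varphi|_{\ss'} = \beta \otimes z$ with $z \in \sl_2$ a nonzero nilpotent element and $\beta \in \ss'^*$ nonzero. Writing $[\varphi(u), z] = cz$ (possible since $\C z$ is an ideal in $\varphi(\ss)$), the compatibility $\varphi([u,y]) = [\varphi(u), \varphi(y)]$ for $y \in \ss'$ becomes $\beta(\alpha y)\, z = c\beta(y)\, z$, giving $\beta \circ \alpha = c\beta$; so $c$ is an eigenvalue of $\alpha$. The value $c = 0$ is excluded because it would place $\varphi(u)$ in the centralizer $\C z$ of the nilpotent $z$, forcing $\rank \varphi \le 1$. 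For $c \ne 0$, a direct $\sl_2$-computation (expand $\varphi(u) = aH + bX_+ + dX_-$ and impose $[\varphi(u), X_+] = cX_+$) yields $d = 0$ and $a = c/2$, hence $\det \varphi(u) = -c^2/4$, so the factor indexed by $c$ in $F_2$ vanishes.

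The main obstacle is precisely this rank-drop analysis: recognizing $\varphi(\ss')$ as the Borel nilradical and converting the bracket identity into the eigenvalue equation $\beta \circ \alpha = c\beta$ for $\alpha$. These structural steps mirror the calculation in Lemma \ref{lem:metab}\eqref{m1}, which serves as a useful template, and the identity $\det \varphi(u) = -c^2/4$ is the same numerical coincidence used there. The remaining ingredients\,---\,existence of a codimension-one ideal, construction of $F_1 F_2$, and the verification that $W$ avoids $0$\,---\,are formal.
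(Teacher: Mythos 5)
Your proof is correct, but it takes a genuinely different route from the paper's. The paper inducts on the derived length: it quotients by the last nonvanishing term $V=\ss^{(k)}$ of the derived series, and handles the homomorphisms that do not factor through $\ss/V$ by restricting them to the metabelian subalgebras $\h_i=V\rtimes_{\alpha_i}\C$ and invoking the explicit classification of $\rep(\h_i,\sl_2)\setminus\rep^1(\h_i,\sl_2)$ from Lemma \ref{lem:metab}\eqref{m1}. You instead induct on $\dim\ss$ via a codimension-one ideal $\ss'$, and in the rank-drop case you argue structurally: $\varphi(\ss)$ is a two-dimensional solvable subalgebra of $\sl_2$, hence a Borel; $\varphi(\ss')$ is its unique one-dimensional ideal $\C X_+$; and the bracket relation forces $\beta\circ\alpha=c\beta$ with $c\in\eig(\alpha)\setminus\{0\}$ and $\det\varphi(u)=-c^2/4$. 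The numerical obstruction $\det\varphi(u)+c^2/4$ is the same in both arguments, but your route bypasses the Jordan-block bookkeeping of Lemma \ref{lem:metab} and makes the inductive step self-contained, at the cost of quietly using two standard facts (every two-dimensional subalgebra of $\sl_2$ is a Borel; the centralizer of a nonzero nilpotent is one-dimensional) that deserve a line of justification. One small point of hygiene: the inductive hypothesis hands you a Zariski closed set $W'$ not containing $0$, not a single polynomial; you should note that $0\notin W'=V(I)$ yields some $f_{\ss'}\in I$ with $f_{\ss'}(0)\ne 0$ and $W'\subseteq V(f_{\ss'})$, which is what the definition of your $F_1$ requires.
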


\begin{proof}
We argue by induction on the length of the derived series, $\{ \ss^{(k)} \}$. If $\ss^{(1)}=\ss'=0$, 
then $\ss$ is nilpotent and we may take $W=\emptyset$. For the inductive step, we assume that 
$\ss^{(k+1)}=0$ and we consider the Lie extension $0\to V \longrightarrow \ss \stackrel{p}{\longrightarrow} \h \to 0$,
where $V=\ss^{(k)}$ is abelian and $\h= \ss/\ss^{(k)}$ satisfies $\h^{(k)}=0$. Hence,
$\rep (\h, \sl_2)\subseteq \rep^1 (\h, \sl_2) \cup W'$, with $W' \subseteq \Hom (\h, \sl_2)$
Zariski closed and $0\not\in W'$. Clearly, $p^*W' \subseteq \Hom (\ss, \sl_2)$ is
Zariski closed and $0\not\in p^*W'$. 

Let $\{ u_i \}\subseteq \ss$ be a lift of an $\h$-basis. Plainly, for each $i$, $\h_i := V \oplus \C \cdot u_i$
is a Lie subalgebra of $\ss$, and $\h_i = V \rtimes_{\alpha_i} \C$, where $\alpha_i= \ad_V (u_i)$. If
$\h_i$ is nilpotent for all $i$, we claim that  $\rep (\ss, \sl_2)\subseteq \rep^1 (\ss, \sl_2) \cup p^* \rep (\h, \sl_2)$.
Consequently, we are done by induction, since clearly $p^* \rep^1 (\h, \sl_2) \subseteq \rep^1 (\ss, \sl_2)$.
To prove the claim, suppose that $\varphi \in \rep (\ss, \sl_2)\setminus p^* \rep (\h, \sl_2)$. This implies that
$\varphi (V)$ is $1$-dimensional, since $V$ is abelian. By our nilpotence assumption, we infer that
$\varphi u_i \in \varphi (V)$ for all $i$, whence $\varphi \in \rep^1 (\ss, \sl_2)$, as claimed.

Thus, we may assume that $\h_i$ is non-nilpotent, for some $i$. For each such $i$, let $f_i$ be a regular lift to
$\Hom (\ss, \sl_2)$ of the polynomial function $f$ on $\Hom (\h_i, \sl_2)$ from Lemma \ref{lem:metab}\eqref{m1}.
Let $V(F) \subseteq \Hom (\ss, \sl_2)$ be the zero set of $F:= \prod f_i$, which clearly does not contain $0$. We
claim that we may take $W=p^*W' \cup V(F)$. 

Indeed, let $\varphi \in \rep (\ss, \sl_2)\setminus  \rep^1 (\ss, \sl_2)$ be arbitrary. If 
$\varphi \in p^* \rep (\h, \sl_2)\subseteq p^* \rep^1 (\h, \sl_2) \cup p^*W'$, we are done.
If $\varphi \not\in p^* \rep (\h, \sl_2)$, pick $i$ such that $\varphi_i \not\in \rep^1 (\h_i, \sl_2)$,
where $\varphi_i$ denotes the restriction of $\varphi$ to $\h_i$, using an argument as before. In
particular, $\h_i$ is non-nilpotent. By Lemma \ref{lem:metab}\eqref{m1}, $f_i(\varphi)=0$. 
Therefore, $F(\varphi)=0$, which completes our proof. 
\end{proof}

Now, we describe a procedure that reduces the study of germs at $0$ of arbitrary Lie representation varieties
to the case when the domain is solvable. We begin with the semisimple case.

\begin{lemma}
\label{lem:nr}
Let $\g$ be a semisimple and $\kk$ be an arbitrary finite-dimensional Lie algebra. 
Then $\rep (\g, \kk)_{(0)}= \{ 0\}$.
\end{lemma}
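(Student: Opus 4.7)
The plan is to show that the Zariski tangent space to $\rep(\g,\kk)$ at $0$ is already trivial, which will immediately force $0$ to be an isolated point and hence give $\rep(\g,\kk)_{(0)} = \{0\}$. Using the natural identification $\rep(\g,\kk) \simeq \F(\CC^\hdot \g, \kk)$ recalled in Section \ref{sec:lie}, I would compute this tangent space by linearizing the Maurer--Cartan equation \eqref{eq:mc} at $\omega = 0$. Since the bracket term is quadratic, the linearization is simply $d\eta = 0$, so $T_0 \F(\CC^\hdot \g, \kk) = Z^1(\CC^\hdot \g) \otimes \kk$.

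Next I would identify $Z^1(\CC^\hdot \g)$ with $\ann([\g,\g]) \subseteq \g^*$, since the Chevalley--Eilenberg differential in degree one is the transpose of the Lie bracket $\bigwedge^2 \g \to \g$. Semisimplicity of $\g$ gives $\g = [\g,\g]$---equivalently, Whitehead's vanishing $H^1(\g) = 0$---so this annihilator is zero.

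Combining these two observations yields $T_0 \rep(\g,\kk) = 0$, and an affine variety whose Zariski tangent space at a point vanishes has that point as an isolated (and in fact smooth) point of the variety. I do not foresee any substantive obstacle here: both ingredients---the shape of the Maurer--Cartan equation linearized at the basepoint, and perfection of a semisimple Lie algebra---are completely standard, and the argument requires no input from the metabelian or solvable analyses carried out earlier in the section. The only mild subtlety worth spelling out is the passage from vanishing of the Zariski tangent space to isolatedness, but this follows at once from the fact that every irreducible component of an affine variety through a point has dimension bounded above by that of the Zariski tangent space there.
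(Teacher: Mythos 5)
Your argument is correct, but it is genuinely different from the paper's. The paper embeds $\kk$ into some $\gl(V)$ via Ado's theorem and then invokes the Nijenhuis--Richardson rigidity theorem: since $H^1(\g,U)=0$ for all finite-dimensional $U$ (Whitehead), $\rep(\g,\gl(V))$ is a finite union of Zariski closed $\GL(V)$-orbits, so the orbit $\{0\}$ is open in it, and intersecting with $\Hom(\g,\kk)$ finishes the proof. You instead bound the Zariski tangent space at $0$: linearizing the Maurer--Cartan equation (equivalently, the homomorphism condition $\varphi[x,y]=[\varphi x,\varphi y]$) at the origin kills the quadratic term, giving $T_0\rep(\g,\kk)\subseteq Z^1(\CC^\hdot\g)\otimes\kk=\ann([\g,\g])\otimes\kk$, which vanishes because $\g$ is perfect; the standard inequality $\dim_0 X\le\dim T_0X$ then forces $0$ to be isolated. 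Two small points of precision: the linearization only gives an \emph{upper bound} for the tangent space of the reduced variety (the ideal of $\rep(\g,\kk)$ may be larger than the one generated by the Maurer--Cartan equations), so the asserted equality should be a containment --- harmless here since the bound is $0$; and only $H^1(\g)=0$ with trivial coefficients (perfection) is needed, not the full Whitehead lemma. Your route is more elementary --- no Ado, no orbit-finiteness --- and in fact shows $0$ is a smooth isolated point; the paper's route proves more globally (every point of $\rep(\g,\gl(V))$ is rigid), which is simply not needed for this lemma.
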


\begin{proof}
By Ado's theorem, $\kk$ may be embedded in $\gl (V)$, for some finite-dimensional vector space $V$.
Note that $\GL (V)$ acts by conjugation on $\rep (\g, \gl (V))$, and the $\GL (V)$-orbit of $0$ is $\{ 0\}$.
By a classical result of Nijenhuis and Richardson \cite[p.~3]{NR}, $\rep (\g, \gl (V))$ is the union of
finitely many Zariski closed $\GL (V)$-orbits, provided $\g$ is semisimple. (This is due to the fact that
$H^1(\g ,U)=0$ when $\dim U<\infty$, see \cite[p.~247]{HS}.) Our claim follows then from the obvious 
equality $\rep (\g, \kk)= \rep (\g, \gl (V)) \cap \Hom (\g, \kk)$. 
\end{proof}

For an arbitrary finite-dimensional Lie algebra $\h$, pick a Levi decomposition $\h= \ss \rtimes_{\alpha} \g$,
with $\ss$ solvable, $\g$ semisimple and $\alpha$ a Lie homomorphism from $\g$ to the Lie algebra of
derivations, $\Der (\ss)\subseteq \gl (\ss)$. Let $\kk$ be another finite-dimensional Lie algebra.
The semi-direct product structure of $\h$ implies that the map 
$$\varphi \in \Hom (\h, \kk) \mapsto (\Phi:= \varphi_{\mid \ss}, \Psi:= \varphi_{\mid \g})\in \Hom (\ss, \kk)\times \Hom (\g, \kk)$$
identifies the variety $\rep (\h, \kk)$ with the Zariski closed subset of $\rep (\ss, \kk) \times \rep (\g, \kk)$ given by the equations
\begin{equation}
\label{eq:homsemi}
\Phi (\alpha (y)x)=[\Psi y, \Phi x]\, , \quad \text{for} \quad x\in \ss \quad \text{and} \quad y\in \g \, .
\end{equation}

Denote by $\widetilde{\ss}$ the (solvable) quotient of $\ss$ by the Lie ideal generated by $\alpha (y)x$, for
$x\in \ss$ and $y\in \g$. Let $q: \ss \surj \widetilde{\ss}$ be the quotient map. Consider the regular map
\begin{equation}
\label{eq:qmap}
Q \colon \rep (\widetilde{\ss}, \kk) \hookrightarrow \rep (\h, \kk)\, ,
\end{equation}
which sends $\widetilde{\Phi}$ to $(q^* \widetilde{\Phi}, 0)$. It follows from \eqref{eq:homsemi}
that $Q$ identifies the variety $\rep (\widetilde{\ss}, \kk)$ with the Zariski closed subvariety 
$\rep (\h, \kk) \cap \rep (\ss, \kk) \times 0$. 

\begin{prop}
\label{prop:homgerms}
Let $\h$ and $\kk$ be finite-dimensional Lie algebras. The map $Q$ from \eqref{eq:qmap} 
induces an analytic isomorphism $\rep (\widetilde{\ss}, \kk)_{(0)} \simeq \rep (\h, \kk)_{(0)}$.
\end{prop}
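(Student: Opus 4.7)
The plan is to exhibit an inverse to $Q$ on germs at $0$, using Lemma \ref{lem:nr} to kill the semisimple component and then descending the solvable component to $\widetilde{\ss}$. Since $Q$ is already known to be a closed embedding identifying $\rep(\widetilde{\ss},\kk)$ with $\rep(\h,\kk)\cap\rep(\ss,\kk)\times 0$, it suffices to prove that this closed subvariety coincides with $\rep(\h,\kk)$ in a Zariski (hence analytic) neighborhood of $0$.

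First, I would use the identification of $\rep(\h,\kk)$ with the subvariety of $\rep(\ss,\kk)\times\rep(\g,\kk)$ cut out by the equations \eqref{eq:homsemi}. Under the projection $\pi\colon \rep(\h,\kk)\to \rep(\g,\kk)$, $\varphi\mapsto \Psi$, the semisimplicity of $\g$ together with Lemma \ref{lem:nr} gives $\rep(\g,\kk)_{(0)}=\{0\}$. Since $\pi$ is a regular map of affine varieties sending $0$ to $0$, it follows that on some Zariski open neighborhood $U\subseteq \rep(\h,\kk)$ of $0$ every element $\varphi=(\Phi,\Psi)$ satisfies $\Psi=0$.

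Next, for $\varphi\in U$ the equations \eqref{eq:homsemi} collapse to
\[
\Phi(\alpha(y)x)=[\Psi y,\Phi x]=0\qquad\text{for all }x\in\ss,\ y\in\g.
\]
Thus $\Phi$ vanishes on the linear span of $\{\alpha(y)x\mid x\in\ss,\,y\in\g\}$. Because $\Phi\colon\ss\to\kk$ is a Lie homomorphism, $\ker\Phi$ is automatically a Lie ideal of $\ss$, so $\Phi$ vanishes on the whole Lie ideal $\cP$ generated by those elements, and therefore factors uniquely as $\Phi=q^*\widetilde{\Phi}$ with $\widetilde{\Phi}\in\rep(\widetilde{\ss},\kk)$. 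This shows that on $U$ we have $\rep(\h,\kk)\subseteq Q(\rep(\widetilde{\ss},\kk))$, which combined with the reverse inclusion $Q(\rep(\widetilde{\ss},\kk))\subseteq \rep(\h,\kk)$ (valid globally) gives equality of germs at $0$. Since $q^*$ is a closed embedding with an obvious algebraic left inverse (choose any linear section of $q\colon\ss\twoheadrightarrow\widetilde{\ss}$ and restrict $\Phi$), the germ-level inverse of $Q$ is itself regular, so $Q$ is an isomorphism of analytic germs at $0$.

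The only non-formal step is the use of Lemma \ref{lem:nr} to force $\Psi=0$ locally; everything else is then bookkeeping with the Maurer--Cartan/semidirect product equations and the universal property of the quotient $\widetilde{\ss}$. I expect no real obstacle, since the semisimple rigidity already appears in \cite{NR} and is recorded in the paper.
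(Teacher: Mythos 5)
Your proof is correct and follows essentially the same route as the paper: the key step in both is to use Lemma \ref{lem:nr} to force $\Psi=0$ in a neighborhood of the origin, after which the equations \eqref{eq:homsemi} reduce the solvable component to a representation of $\widetilde{\ss}$. The extra details you supply (the factorization of $\Phi$ through $q$ and the regularity of the inverse) are exactly what the paper's proof delegates to the discussion preceding the proposition.
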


\begin{proof}
By the above discussion, it is enough to show that, for $(\Phi, \Psi)\in \rep (\h, \kk)$ near the origin,
$\Psi =0$. This follows from Lemma \ref{lem:nr}.
\end{proof}

\begin{corollary}
\label{cor:homsl2}
For any finite-dimensional Lie algebra $\h$, $\F(\CC^\hdot \h ,\sl_2)_{(0)} = \F^1 (\CC^\hdot \h ,\sl_2)_{(0)}$.
\end{corollary}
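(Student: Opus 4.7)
The plan is to translate everything to the language of Lie representation varieties and then chain together the two main inputs already established: Lemma \ref{lem:solvsl2} (solvable case) and Proposition \ref{prop:homgerms} (reduction from arbitrary $\h$ to a solvable quotient $\widetilde{\ss}$). Under the natural identifications $\F(\CC^\hdot \h, \sl_2) \simeq \rep(\h, \sl_2)$ and $\F^1(\CC^\hdot \h, \sl_2) \simeq \rep^1(\h, \sl_2)$ recalled at the beginning of this section, the claim becomes $\rep(\h, \sl_2)_{(0)} = \rep^1(\h, \sl_2)_{(0)}$, so I work with representation varieties throughout.

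First I would fix a Levi decomposition $\h = \ss \rtimes_\alpha \g$ and form the solvable quotient $\widetilde{\ss}$ together with the regular map $Q\colon \rep(\widetilde{\ss}, \sl_2) \hookrightarrow \rep(\h, \sl_2)$ from \eqref{eq:qmap}. By Proposition \ref{prop:homgerms} applied to $\kk = \sl_2$, the map $Q$ induces an analytic isomorphism of germs $\rep(\widetilde{\ss}, \sl_2)_{(0)} \simeq \rep(\h, \sl_2)_{(0)}$. Since $\widetilde{\ss}$ is a solvable finite-dimensional Lie algebra, Lemma \ref{lem:solvsl2} gives $\rep(\widetilde{\ss}, \sl_2)_{(0)} = \rep^1(\widetilde{\ss}, \sl_2)_{(0)}$.

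The remaining point, and really the only thing to verify, is that $Q$ carries the rank-one locus onto the rank-one locus of the germ at $0$. This is immediate from the definition of $Q$: for $\widetilde{\Phi} \in \rep(\widetilde{\ss}, \sl_2)$, the image $Q(\widetilde{\Phi}) = (q^* \widetilde{\Phi}, 0)$ has the same image in $\sl_2$ as $\widetilde{\Phi}$ itself, because the quotient $q\colon \ss \surj \widetilde{\ss}$ is surjective and the $\g$-component is zero; hence $\rank Q(\widetilde{\Phi}) = \rank \widetilde{\Phi}$. Conversely, any $\varphi \in \rep(\h, \sl_2)$ sufficiently close to $0$ is of the form $(\Phi, 0)$ with $\Phi$ factoring through $q$ (by the proof of Proposition \ref{prop:homgerms}), so its rank equals that of the corresponding $\widetilde{\Phi}$. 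Thus $Q$ restricts to an analytic isomorphism $\rep^1(\widetilde{\ss}, \sl_2)_{(0)} \simeq \rep^1(\h, \sl_2)_{(0)}$.

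Chaining the three equalities/isomorphisms yields
\[
\rep(\h, \sl_2)_{(0)} \simeq \rep(\widetilde{\ss}, \sl_2)_{(0)} = \rep^1(\widetilde{\ss}, \sl_2)_{(0)} \simeq \rep^1(\h, \sl_2)_{(0)},
\]
and translating back via $\F \simeq \rep$ gives the corollary. There is no real obstacle here: the only thing one has to be a little careful about is that the rank-one condition is preserved by $Q$ in both directions near the basepoint, and this is a direct consequence of the explicit form of $Q$ together with the vanishing of the $\g$-component guaranteed by Proposition \ref{prop:homgerms}.
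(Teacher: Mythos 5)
Your proposal is correct and follows essentially the same route as the paper: reduce to $\rep(\h,\sl_2)_{(0)}=\rep^1(\h,\sl_2)_{(0)}$, use Proposition \ref{prop:homgerms} to see that any $\varphi=(\Phi,\Psi)$ near $0$ has $\Psi=0$ and $\Phi=q^*\widetilde{\Phi}$, and then apply Lemma \ref{lem:solvsl2} to the solvable quotient $\widetilde{\ss}$ to conclude $\dim\im(\Phi)\le 1$. The only cosmetic difference is that you phrase the argument as a chain of germ isomorphisms (checking rank preservation by $Q$ in both directions), whereas the paper only needs the one nontrivial inclusion since $\rep^1\subseteq\rep$ holds by definition.
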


\begin{proof}
We have to show that $\rep (\h, \kk)_{(0)}= \rep^1 (\h, \kk)_{(0)}$, when $\kk=\sl_2$. 
For $\varphi =(\Phi, \Psi)\in \rep (\h, \kk)$ near $0$, we know from Proposition \ref{prop:homgerms}
that $\Psi =0$ and $\Phi= q^* \widetilde{\Phi}$ with $\widetilde{\Phi}\in \rep (\widetilde{\ss}, \kk)$
near $0$. Lemma \ref{lem:solvsl2} implies that $\dim \im ( \widetilde{\Phi}) \le 1$, and therefore 
$\dim \im (\Phi) \le 1$. We deduce that $\varphi \in \rep^1 (\h, \sl_2)$, which completes the proof.
\end{proof}

We end with the following Lie analog of Theorem \ref{thm:jumpsol}.

\begin{theorem}
\label{thm:jumplie}
Let $\h$ be a finite-dimensional complex Lie algebra and let $\theta\colon \sl_2(\C) \to \gl(V)$ be
a finite-dimensional Lie representation with $V\ne 0$. Then $\F (\CC^\hdot \h, \sl_2)_{(0)}$ is
equal to $\{ 0\}$ when $H^1 (\h)=0$, and otherwise is isomorphic to $\cone (\PP(H^1(\h))\times \PP(\sl_2))_{(0)}$.
In the first case, $\RR^i_1( \CC^\hdot \h, \theta)_{(0)}$ is empty or equal to  $\{ 0\}$, depending on
whether $H^i (\h)$ vanishes or not. In the second case, the embedded germs of depth $1$ resonance varieties are given by
\begin{equation*}
\RR^i_1(\CC^\hdot \h, \theta)_{(0)} =
\begin{cases}
\emptyset & \text{if $H^i (\h)=0$,}\\
\cone (\PP(H^1(\h))\times V(\det \circ \theta))_{(0)}
& \text{otherwise.}
\end{cases}
\end{equation*}
\end{theorem}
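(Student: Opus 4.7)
The plan is to assemble Corollary \ref{cor:pisharp} with the verifications provided by Proposition \ref{prop:lieres} and Corollary \ref{cor:homsl2}. The argument parallels the proof of Theorem \ref{thm:jumpsol}, but now applied directly to $A=\CC^\hdot \h$, with no detour through \cite{DP-ccm}.

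I would first settle the description of $\F(\CC^\hdot \h, \sl_2)_{(0)}$. Corollary \ref{cor:homsl2} identifies this germ with $\F^1(\CC^\hdot \h, \sl_2)_{(0)}$, and the rank-one part is by construction the image of the quadratic map $P\colon H^1(\h)\times \sl_2 \to H^1(\h)\otimes \sl_2$. Hence when $H^1(\h)=0$ this image collapses to $\{0\}$, while when $H^1(\h)\ne 0$ the coordinate description \eqref{eq:pcoord} exhibits it as the affine cone on $\PP(H^1(\h))\times \PP(\sl_2)$.

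Next I would handle the rank-two depth-one germs by case analysis on $H^i(\h)$. If $H^i(\h)=0$, then \eqref{eq:germs12} gives $\RR^i_1(\CC^\hdot \h,\theta)_{(0)}=\emptyset$ at once. If $H^i(\h)\ne 0$, Proposition \ref{prop:lieres} supplies the trivial-resonance hypothesis in degree $i$, and Corollary \ref{cor:homsl2} supplies the flat-connection hypothesis, so Corollary \ref{cor:pisharp} applies and yields $\RR^i_1(\CC^\hdot \h,\theta)_{(0)}=\Pi(\CC^\hdot \h,\theta)_{(0)}$. In the subcase $H^1(\h)=0$, the set $\Pi(\CC^\hdot \h,\theta)$ is contained in $\F^1(\CC^\hdot \h,\sl_2)=\{0\}$ and contains $0$, so the germ is $\{0\}$, as required. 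In the subcase $H^1(\h)\ne 0$, the surjection $P\colon H^1(\h)\times V(\det\circ\theta)\to \Pi(\CC^\hdot \h,\theta)$, combined with the same cone-on-a-product identification as above, produces the stated formula.

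No genuine obstacle is anticipated: all the substantive work is already behind us. The real difficulty, namely verifying the flat-connection hypothesis for an arbitrary finite-dimensional $\h$, was dealt with in Lemma \ref{lem:metab}, Lemma \ref{lem:solvsl2}, and Proposition \ref{prop:homgerms}, culminating in Corollary \ref{cor:homsl2}. What remains for this theorem is the formal bookkeeping that matches each branch of the statement with the appropriate specialization of Corollary \ref{cor:pisharp} and the standard cone description of rank-one tensors from Section \ref{sec:gral}.
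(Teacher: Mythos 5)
Your proposal is correct and follows essentially the same route as the paper: verify the two hypotheses of Corollary \ref{cor:pisharp} via Proposition \ref{prop:lieres} and Corollary \ref{cor:homsl2}, then identify $\F^1$ and $\Pi$ with the appropriate cones as in Theorem \ref{thm:jumpsol}. The only (immaterial) difference is that in the subcase $H^1(\h)=0$, $H^i(\h)\ne 0$ the paper concludes directly from \eqref{eq:germs12} that the nonempty germ must be $\{0\}$ inside $\F(\CC^\hdot\h,\sl_2)_{(0)}=\{0\}$, whereas you route this through Corollary \ref{cor:pisharp} again; both are valid.
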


\begin{proof}
By Proposition \ref{prop:lieres} and Corollary \ref{cor:homsl2}, the hypotheses of Corollary \ref{cor:pisharp}
are satisfied by $A=\CC^\hdot \h$ and $\g=\sl_2$, if $H^i (\h) \ne 0$. The proof goes as in
Theorem \ref{thm:jumpsol}, when  $H^1 (\h) \ne 0$. Finally, assume $H^1 (\h) = 0$. Then
$\F (\CC^\hdot \h, \sl_2)_{(0)}= \{ 0\}$, since $\F^1 (\CC^\hdot \h, \sl_2)= \{ 0\}$, by 
definition \eqref{eq:pmap}. The remaining assertions, on $\RR^i_1(\CC^\hdot \h, \theta)_{(0)}$,
follow then from \eqref{eq:germs12}. 
\end{proof}

\begin{ack}
We are grateful to the referee, for  pertinent remarks and suggestions.
\end{ack}

\newcommand{\arxiv}[1]
{\texttt{\href{http://arxiv.org/abs/#1}{arxiv:#1}}}
\newcommand{\doi}[1]
{\texttt{\href{http://dx.doi.org/#1}{doi:#1}}}

\end{document}